\newtheorem{theorem}{Theorem}
\newtheorem{lemma}[theorem]{Lemma}
\newtheorem{proposition}[theorem]{Proposition}
\newtheorem{corollary}[theorem]{Corollary}
\theoremstyle{remark}
\newcommand{\Q}{\mathbb{Q}}
\newcommand{\R}{\mathbb{R}}
\newcommand{\Z}{\mathbb{Z}}
\newcommand{\PP}{\mathbb{P}}
\newcommand{\T}{\Theta}
\DeclareMathOperator{\val}{val}
\newcommand{\comment}[1]{} 
\title{Elliptic Curves in Honeycomb Form}
\author{Melody Chan}
\address{Department of Mathematics, University of California,
Berkeley, CA 94720, USA}
\email{mtchan@math.berkeley.edu}
\author{Bernd Sturmfels }
\address{Department of Mathematics, University of California,
Berkeley, CA 94720, USA}
\email{bernd@math.berkeley.edu}
\begin{document}
\date{}

\begin{abstract}
A plane cubic curve, defined over a 
field with valuation,  is in honeycomb form if its
tropicalization exhibits the standard hexagonal cycle.
We explicitly compute such representations from a given
$j$-invariant with negative valuation, we give an analytic characterization
of elliptic curves in honeycomb form, and we offer
a detailed analysis of the tropical group law on 
such a curve.
\end{abstract}

\maketitle

\section{Introduction}

Suppose $K$ is a field with a nonarchimedean valuation ${\rm val}:K^*\rightarrow\mathbb{R}$,
 such as the rational numbers
$\Q$ with their $p$-adic valuation for some prime $p \geq 5$ or
 the rational functions $ \Q(t)$ with the $t$-adic valuation.
Throughout this paper, we shall assume that the
residue field of $K$ has characteristic different from $2$ and $3$.

We consider a  ternary cubic polynomial
whose coefficients  $c_{ijk}$ lie in $K$:
\begin{equation}
\label{eq:cubic}
\begin{matrix}
f(x,y,z) & = &
   \,\, c_{300}x^3+c_{210}x^2y +c_{120}xy^2+c_{030}y^3+c_{021}y^2z \\
& &  +\,c_{012}yz^2+c_{003}z^3+c_{102}xz^2+c_{201}x^2z+c_{111}xyz  .
\end{matrix} 
\end{equation}
Provided the discriminant of $f(x,y,z)$ is non-zero, this cubic
represents an elliptic curve $E$ in the projective plane $\PP^2_K$.
The group ${\rm GL}(3,K)$ acts on the projective space
$\mathbb{P}^9_K$ of all cubics.
The field of rational invariants under this action 
is generated by  the familiar {\em j-invariant}, which we can write explicitly 
(with coefficients in $\mathbb{Z}$) as
\begin{equation}
\label{eq:jinvariant}
 j(f) \,\, = \,\,\,
\frac{\hbox{a polynomial  of degree $12$ in the $c_{ijk}$ having $1607$ terms}}{\hbox{a polynomial 
 of degree $12$ in the $c_{ijk}$ having $2040$ terms}}.
\end{equation}

The Weierstrass normal form
of an elliptic curve can be obtained from $f(x,y,z)$ by applying a matrix
in ${\rm GL}(3,\overline{K})$.
From the perspective of  tropical geometry, however,
the Weierstrass form  is too limiting:
its tropicalization never has a cycle.
One would rather have a model for plane cubics
whose tropicalization looks like the graphs
 in Figures~\ref{fig:eins}, \ref{fig:law1} and \ref{fig:line}.
 If this holds then
  we say that $f$ is in {\em honeycomb form}.  Cubic curves in honeycomb form are the central object of interest in this paper.

  Honeycomb curves of arbitrary degree were studied in
   \cite[\S 5]{speyer1}; they are    dual to the standard triangulation of the Newton polygon of $f$. 
   For cubics in honeycomb form,
    by \cite{kmm}, the lattice length of the hexagon 
 equals $- {\rm val}(j(f))$. Moreover, by
 \cite{bpr}, a honeycomb cubic faithfully represents a subgraph of the Berkovich curve $E^{an}$.

A standard Newton subdivision argument \cite{ms} shows that 
a cubic $f$ is in honeycomb form if and only if the following nine 
scalars in $K$ have positive valuation:
\begin{eqnarray}
\label{eq:sixratios}
\frac{c_{021} c_{102}}{c_{111} c_{012}} , \frac{c_{012} c_{120}}{c_{111} c_{021}},
\frac{c_{201} c_{012}}{c_{111} c_{102}}, \frac{c_{210} c_{021}}{c_{111} c_{120}},
\frac{c_{102} c_{210}}{c_{111} c_{201}} , \frac{c_{120} c_{201}}{c_{111} c_{210}},
\end{eqnarray}
\begin{eqnarray}
\label{eq:threeratios}
\frac{c_{111}c_{003}}{c_{012}c_{102}}  \,,\, \frac{c_{111} c_{030}}{c_{021}c_{120}}\,,\,
  \frac{c_{111}c_{300}}{c_{201}c_{210}}. 
\end{eqnarray}
If the six ratios in (\ref{eq:sixratios}) have the same  positive valuation, and also
the three ratios in (\ref{eq:threeratios}) have the same positive valuation,
then we say that $f$ is in {\em symmetric honeycomb form}.  So $f$ is in symmetric honeycomb form if and only if the lattice lengths of the six sides of the hexagon are equal, and the lattice lengths of the three bounded segments coming off the hexagon are also equal, as in Figure~\ref{fig:eins} on the right.

\begin{figure}
\centering
\subfloat{\includegraphics[width=0.5\textwidth]{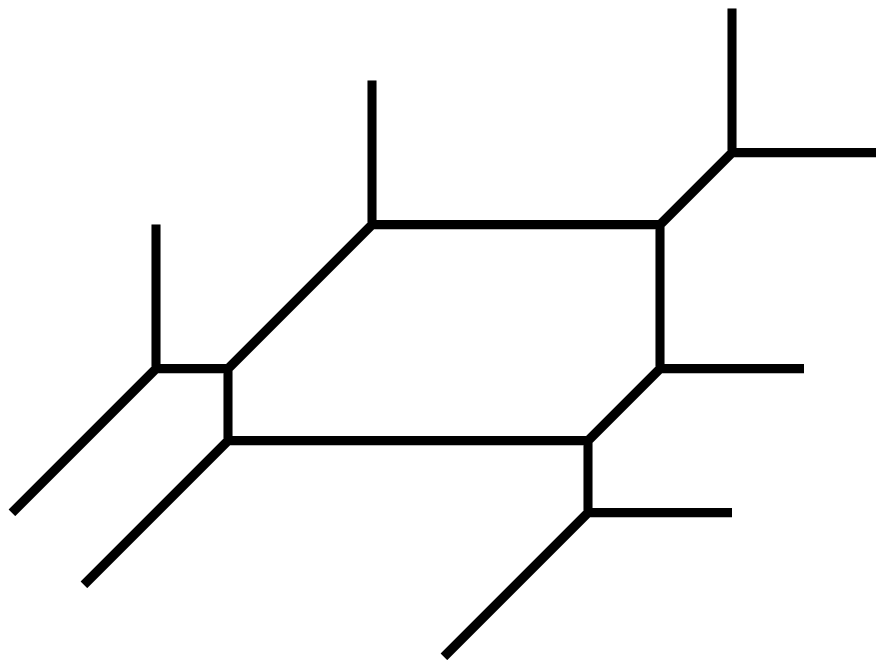}} 
\qquad
\subfloat{\includegraphics[width=0.4\textwidth]{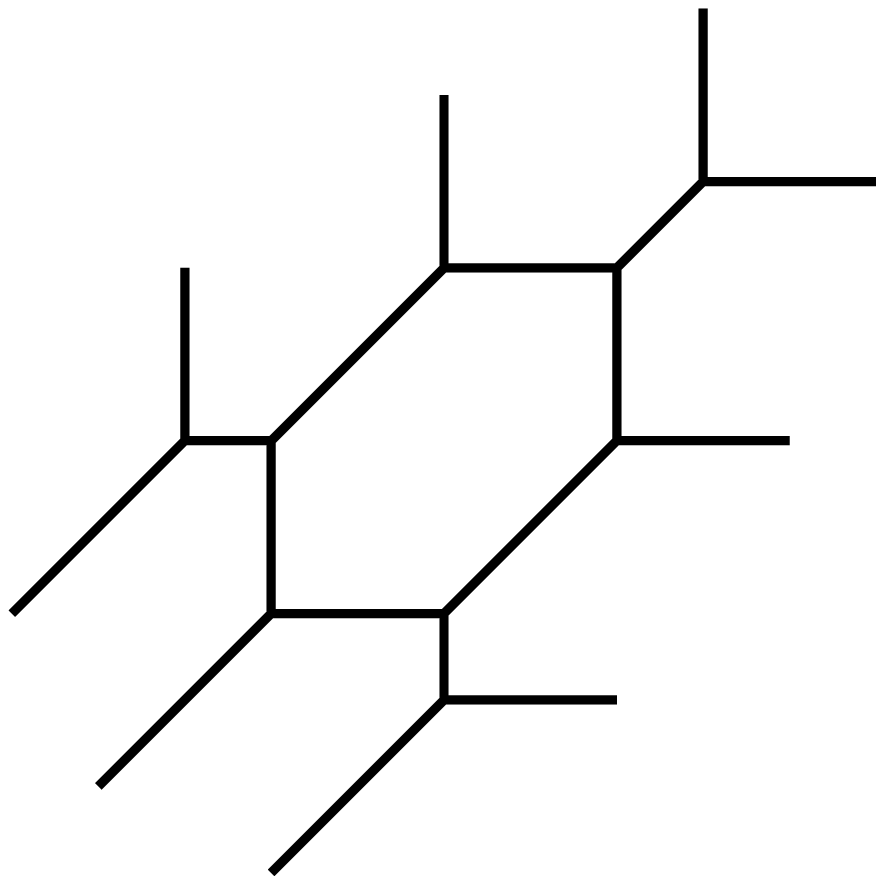}} 
\vskip -0.1in
\caption{Tropicalizations of plane cubic curves
in honeycomb form. The curve on the right is in
symmetric honeycomb form; the one on the left is not symmetric.}
\label{fig:eins}
\end{figure}

Our contributions in this paper are as follows.
In Section 2 we focus on symmetric honeycomb cubics.
We present a symbolic algorithm whose input is an arbitrary cubic
$f$ with ${\rm val}(j(f)) < 0$ and
whose output is a $3 \times 3$-matrix $M$
such that $f \circ M$ is in symmetric honeycomb form.
This answers a question raised by
Buchholz and Markwig (cf.~\cite[\S 6]{buch}).
We pay close attention to the arithmetic of the entries of $M$.
Our key tool is the relationship between honeycombs and
 the {\em Hesse pencil}~\cite{AD, nobe}.
Results similar to those in Section 2 were obtained independently by Helminck \cite{helminck}.

Section 3 discusses the {\em Tate parametrization} \cite{silverman}
of elliptic curves using theta functions. Our approach is similar to
that used by Speyer in \cite{speyer2} for lifting tropical curves.
 We present an analytic characterization
of honeycomb cubics with prescribed $j$-invariant, and we give a numerical algorithm
for computing such cubics.

Section 4 explains a combinatorial rule for the {\em tropical group law}
on a honeycomb cubic $C$.
Our object of study is the tropicalization of the 
surface $\,\{u,v,w \in C^3 \, |\, u \star v \star  w =\rm{id}\} \,\subset \,(\mathbb{P}^2)^3$.
Here  $\star$ denotes multiplication on $C$.
  We explain how to compute this tropical surface  in $(\mathbb{R}^2)^3$.
  See Corollary \ref{cor:eleven} for a concrete instance.
Our results
  complete  the partially defined group law found by Vigeland~\cite{vige}.

Practitioners of computational algebraic geometry are well
aware of the challenges involved in working with algebraic varieties
over a valued field~$K$. One aim of this article is to demonstrate
how these challenges can be overcome in practice, at least for the basic case of
elliptic curves. In that sense, our paper can be read as a
computational algebra supplement  to the work
of Baker, Payne and Rabinoff~\cite{bpr}.

Many of our methods
have been implemented in \textsc{Mathematica}.
Our code and the examples in this paper
can be   found at our
supplementary materials website
$$ \hbox{{\tt www.math.berkeley.edu/$\sim$mtchan/honeycomb.html}} $$
In our test implementations, the input data are assumed to lie in the field 
$K = \mathbb{Q} (t) $, and scalars in $\overline{K}$ are
represented  as truncated Laurent series 
with coefficients in $\overline{\mathbb{Q}}$.
This  is analogous to the representation of
scalars in $\mathbb{R}$ by floating point  numbers.

\section{Symmetric Cubics}

We begin by establishing the existence of symmetric honeycomb forms
for elliptic curves whose $j$-invariant has negative valuation. Consider a symmetric  cubic 
\begin{equation}
\label{eq:specialf1}
g \,\,\, = \,\,\,
a \cdot (x^3+y^3+z^3) \, + \,
b \cdot (x^2 y + x^2 z + x y^2 + x z^2 + y^2 z + y z^2) \,
+  xyz . 
\end{equation}
The conditions in (\ref{eq:sixratios})-(\ref{eq:threeratios}) imply that
$g$ is in symmetric honeycomb form if and only~if 
\begin{equation}
\label{eq:specialf2}
 {\rm val}(a) \, > \, 2 \cdot {\rm val}(b) \,  > \, 0.
  \end{equation}
Our aim in this section is to transform arbitrary cubics (\ref{eq:cubic}) 
to symmetric cubics in honeycomb form. In other words, we seek
to achieve   both (\ref{eq:specialf1}) and (\ref{eq:specialf2}).
Note that $a=0$ is allowed by the valuation inequalities (\ref{eq:specialf2}),
but $b$ must be non-zero in (\ref{eq:specialf1}).
The classical {\em Hesse normal form} of \cite[Lemma 1]{AD},
whose tropicalization was examined recently by Nobe \cite{nobe},
is therefore ruled out by the honeycomb~condition.

\begin{proposition} \label{thm1}
Given any two scalars $ \iota$ and $ a$ in $K$ with ${\rm val}(\iota) < 0$
and ${\rm val}(a) + {\rm val}(\iota) > 0$,
there exist precisely six elements $b $ in the algebraic closure $\overline{K}$,
defined by an equation of degree $12$ over $K$, such that the cubic $g$ 
above has $j$-invariant $\,j(g) = \iota \,$ and is in symmetric honeycomb form.
\end{proposition}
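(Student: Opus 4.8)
The plan is to compute the $j$-invariant of the one-parameter family $g$ in (\ref{eq:specialf1}) directly, as a rational function of the two coefficients $a$ and $b$ (the coefficient of $xyz$ having been normalized to $1$). Since $g$ is symmetric under the $S_3$-action permuting $x,y,z$ and also under scaling, the general formula (\ref{eq:jinvariant}) with its thousands of terms collapses drastically. First I would substitute the symmetric coefficient pattern into the classical discriminant and into the degree-$12$ numerator of $j$, obtaining $j(g) = P(a,b)/Q(a,b)$ where $P$ and $Q$ are explicit polynomials; I expect both to be homogeneous of degree $12$ after accounting for the weight of the $xyz$-coefficient, so that in fact $j(g)$ depends on the single ratio, but it is cleaner to keep $a$ as a free parameter and treat $j(g) = \iota$ as an equation in $b$ alone. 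A degree count in $b$ of the equation $P(a,b) - \iota\, Q(a,b) = 0$ should give $12$, which accounts for the ``degree $12$ over $K$'' and the ``six elements $b$ in $\overline{K}$'' once we see that the honeycomb condition (\ref{eq:specialf2}) selects exactly half the roots.

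Next I would impose the honeycomb inequalities ${\rm val}(a) > 2\,{\rm val}(b) > 0$. Here the hypotheses ${\rm val}(\iota) < 0$ and ${\rm val}(a) + {\rm val}(\iota) > 0$, i.e. ${\rm val}(a) > -{\rm val}(\iota) = {\rm val}(1/\iota) > 0$, are exactly what is needed. The strategy is a Newton-polygon analysis of the degree-$12$ equation in $b$: I would determine the valuations of the coefficients of this polynomial in $b$ as (piecewise-linear) functions of ${\rm val}(a)$ and ${\rm val}(\iota)$, draw the Newton polygon, and read off the valuations of the $12$ roots. The claim to verify is that exactly six of these roots $b$ satisfy both $2\,{\rm val}(b) < {\rm val}(a)$ and ${\rm val}(b) > 0$ — these are the honeycomb solutions — while the other six violate one of the inequalities (I expect the leading behavior to be governed by the low-order term, where $\iota$ enters, forcing the two ``halves'' of the Newton polygon to have slopes of opposite sign). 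Since $b$ and $-b$, together with the action of cube roots of unity or the sign changes $x \mapsto -x$ etc., permute solutions with the same valuation, the count of six should come out with the right multiplicity, and genericity of $\iota$ (beyond the valuation constraint) guarantees the roots are distinct and the Newton polygon is non-degenerate.

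The main obstacle will be the explicit computation of $P(a,b)$ and $Q(a,b)$ and, more delicately, the Newton-polygon bookkeeping: one has to make sure that the $12$ roots really do split $6$--$6$ across the honeycomb threshold for \emph{all} admissible $(a,\iota)$, not merely generically, and that no roots land exactly on the boundary ${\rm val}(b) = 0$ or $2\,{\rm val}(b) = {\rm val}(a)$. I would handle this by showing the relevant vertices of the Newton polygon are determined by monomials whose coefficients are units (using that the residue characteristic is not $2$ or $3$, so the small integer coefficients appearing in the discriminant are invertible), which pins down the slopes exactly and keeps the root valuations off the boundary. A secondary point worth isolating as a lemma: the passage from ``$b$ satisfies the degree-$12$ equation and lies in the honeycomb range'' to ``$g$ is in \emph{symmetric} honeycomb form'' is immediate from (\ref{eq:specialf2}), which was already derived from (\ref{eq:sixratios})--(\ref{eq:threeratios}) in the text, so no extra work is needed there — the content is entirely the root count.
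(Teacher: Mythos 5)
Your proposal follows essentially the same route as the paper: specialize the $j$-invariant to the symmetric family to get a degree-$12$ equation in $b$, analyze its Newton polygon (using that the residue characteristic is not $2$ or $3$), and observe that the hypothesis ${\rm val}(a)+{\rm val}(\iota)>0$ keeps $a$ from interfering, so the roots split six--six with only one group in the honeycomb range. One small correction: the two groups of roots have valuations $0$ and $-{\rm val}(\iota)/6$ (not slopes of opposite sign), and distinctness of the six honeycomb roots needs no genericity assumption on $\iota$ --- they are indexed by the six distinct sixth roots of $\iota^{-1}$ appearing as leading terms, as the paper's expansion makes explicit.
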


\begin{proof}
First, consider the case $a = 0$, so that ${\rm val}(a) = \infty$.
By specializing (\ref{eq:jinvariant}), we deduce that the
 $j$-invariant of $g  =  b(x^2 y + x^2 z + x y^2 + x z^2 + y^2 z + y z^2 ) +   xyz $~is
\begin{equation}
\label{eq:mustsolve1}
j(g) \,\,\, = \,\,\,  \frac{
(48 b^3 - 24 b^2 + 1)^3}
{ b^6  (2b-1)^3 (3b-1)^2  (6b+1)
}.
 \end{equation}
 Our task is to find $b \in \overline{K}$ such that
   $j(g) = \iota$. The expansion of this equation  equals
  \begin{equation}
  \label{eq:ceq}
  \begin{matrix}
432 \iota b^{12} - 864 \iota b^{11} + 648 \iota b^{10}
-(208 \iota + 110592)b^9
+(15 \iota+165888) b^8 & \\ \!\! + \,
(6 \iota -82944) b^7 
  -( \iota-6912)b^6
  +  6912 b^5-1728 b^4-144 b^3+72b^2-1& \!\! = \,\, 0.
  \end{matrix}
    \end{equation}
We examine  the  Newton polygon of  this equation.
    It is independent of $K$ because the characteristic of the residue field of $K$ is not $ 2$ or $3$.
     Since $\iota$ has negative valuation, we see that (\ref{eq:ceq}) has
six solutions $b \in \overline{K}$ with ${\rm val}(b) = 0$ and six solutions $b$
with ${\rm val}(b) = -{\rm val}(\iota)/6$. 
The latter six solutions
are indexed by the choice of a sixth root  of $\iota^{-1}$. They
  share the following  expansion as a Laurent series~in~$\iota^{-1/6}$:
    $$
 \begin{matrix} b &  =  &  
 \iota^{-1/6}
+ \iota^{-1/3}
-5 \iota^{-1/2}
-7 \iota^{-2/3}
+30 \iota^{-5/6}
+43 \iota^{-1}
-60 \iota^{-7/6}
-15 \iota^{-4/3} \\ & & 
-731 \iota^{-3/2}
-1858 \iota^{-5/3}
+11676 \iota^{-11/6}
+22091 \iota^{-2}
-30612 \iota^{-13/6} + \cdots
\end{matrix}
$$
These six values of $b$ establish the assertion in Proposition \ref{thm1} when $a = 0$. 

Now suppose $a \not= 0$. Then
our equation $j(g) = \iota$ has the  more complicated~form
\begin{equation}
\label{eq:mustsolve2} 
\frac{(6a-1)^3 (72 a b^2-48 b^3-36 a^2 + 24 b^2 - 6a -1)^3}{
(3 a+6 b+1)(3 a-3b+1)^2 (9a^3-3ab^2+2b^3-3a^2-b^2+a)^3} \,\,\, = \,\,\, \iota.
\end{equation}
Our hypotheses on $K$ and $a$ ensure that ${\rm val}(a)$ is large enough so as
to not interfere with the lowest order terms when solving this equation for $b$.
In particular, the degree $12$ equation in the unknown $b$ with coefficients in $K$
resulting from (\ref{eq:mustsolve2})
has the same Newton polygon as  equation (\ref{eq:ceq}).
As before,  this equation has $12$ solutions $b = b(\iota,a)$ 
that are scalars in $\overline{K}$, and six of the solutions satisfy  ${\rm val}(b) = 0$
while the other six satisfy ${\rm val}(b) = -{\rm val}(\iota)/6$.
The latter six establish our assertion.
\end{proof}

We have proved the existence of a symmetric honeycomb form
for any nonsingular cubic whose $j$-invariant has negative valuation. Our main
goal in what follows is to describe an algorithm for computing a
$3 \times 3$ matrix that transforms a given cubic into that form.
Our method is to compute the nine inflection points of each cubic  and find a suitable
projective transformation that takes one set of points to the other.
Computing the inflection points is a relatively easy task in the special
case of symmetric cubics.
The result of that computation is the following lemma.

\begin{lemma}
Let $C$ be a nonsingular cubic curve defined over $K$ by a symmetric polynomial $g$ as in (\ref{eq:specialf1}), 
fix a primitive third root of unity $\xi$ in $\overline{K}$, and set
\begin{equation}
\label{eq:uratio}
 \omega \quad = \quad \frac{\phantom{-}3 a+6 b+1}{-3 a+3b-1}.  
 \end{equation}
Then the nine inflection points of $C$ in $\PP^2$ are given by the rows of the matrix
\begin{equation}
\label{eq:Aomega}
A_\omega \quad = \quad \begin{bmatrix}
1 & -1 & 0 \\
 1 & 0 & -1 \\
  0 & 1 & -1 \\
1 + \omega^{1/3} & 1 + \xi \omega^{1/3} & 1 + \xi^2 \omega^{1/3} \\
1 + \xi \omega^{1/3} & 1 + \xi^2 \omega^{1/3} & 1 + \omega^{1/3} \\
1 + \xi^2 \omega^{1/3} & 1+ \omega^{1/3} & 1 + \xi \omega^{1/3} \\
1 + \xi \omega^{1/3} & 1+ \omega^{1/3} & 1 + \xi^2 \omega^{1/3} \\
1 + \omega^{1/3} & 1 + \xi^2 \omega^{1/3} & 1 + \xi \omega^{1/3} \\
1 + \xi^2 \omega^{1/3} & 1+\xi \omega^{1/3} & 1 + \omega^{1/3} \\
\end{bmatrix}.
\end{equation}
\end{lemma}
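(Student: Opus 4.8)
The plan is to bring the symmetric cubic $g$ into the \emph{Hesse pencil} by an explicit linear substitution, read off the inflection points there, and transport them back to the original coordinates. The relevant classical input is that the Hesse pencil $\bigl\{\mu(u^3+v^3+w^3)+\nu\,uvw\bigr\}$ is stable under the Hessian: a direct expansion of the $3\times3$ determinant gives
\[
\mathrm{Hess}\bigl(\mu(u^3+v^3+w^3)+\nu\,uvw\bigr)\;=\;-6\mu\nu^{2}(u^3+v^3+w^3)+(216\mu^{3}+2\nu^{3})\,uvw,
\]
which needs only $\mathrm{char}\,\overline{K}\neq 2,3$. Hence a smooth member meets its Hessian, again a member of the pencil, precisely in the nine base points $\{u^3+v^3+w^3=uvw=0\}$, and these --- distinct for a smooth member --- are exactly its inflection points. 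After rescaling coordinates, the same applies to the weighted pencil $\bigl\langle\alpha u^3+\beta v^3+\delta w^3,\ uvw\bigr\rangle$, whose nine base points lie three to a coordinate line, with $(u:v:0)$ determined by $(u/v)^3=-\beta/\alpha$ and cyclically.

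I would then produce the substitution. Let $T$ be the invertible Fourier change of coordinates $x=u+v+w$, $y=u+\xi v+\xi^{2}w$, $z=u+\xi^{2}v+\xi w$. A short computation with the identities relating power sums and elementary symmetric polynomials yields $x^3+y^3+z^3\mapsto3(u^3+v^3+w^3)+18uvw$, $xyz\mapsto u^3+v^3+w^3-3uvw$, and $x^2y+x^2z+xy^2+xz^2+y^2z+yz^2\mapsto6u^3-3v^3-3w^3$, so that
\[
g\circ T\;=\;(3a+6b+1)\,u^{3}+(3a-3b+1)(v^{3}+w^{3})+(18a-3)\,uvw.
\]
Nonsingularity of $C$ forces $3a+6b+1\neq0$ and $-3a+3b-1\neq0$ (otherwise $g\circ T$ acquires a singular point, e.g.\ at $(1:0:0)$, or factors), so $\omega$ is a nonzero scalar of $\overline{K}$ and $-\beta/\alpha=-(3a-3b+1)/(3a+6b+1)=1/\omega$. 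Thus $g\circ T$ lies in the weighted Hesse pencil above, and its nine inflection points are $(0:1:-\xi^{k})$ on $\{u=0\}$, $(1:\xi^{k}\omega^{1/3}:0)$ on $\{w=0\}$, and $(1:0:\xi^{k}\omega^{1/3})$ on $\{v=0\}$, for $k=0,1,2$.

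It remains to push these nine points forward through $T$. One checks that $(0:1:-\xi^{k})$ maps to $(0:1:-1),(1:-1:0),(1:0:-1)$, the first three rows of $A_\omega$; that $(1:\xi^{k}\omega^{1/3}:0)$ maps to rows $4,5,6$; and that $(1:0:\xi^{k}\omega^{1/3})$ maps to rows $7,8,9$; this accounts for all of $A_\omega$. The main obstacle is bookkeeping rather than mathematics: one must get $g\circ T$ right (the coefficient-$b$ part is the fiddly piece) and then line the three cube roots $\xi^{k}\omega^{1/3}$ up against the $S_3$-pattern of $A_\omega$ so that the images appear in exactly the stated order.

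As an independent check --- and an alternative, more pedestrian proof --- one may instead verify directly that each of the nine listed points lies on $C$ and on the Hessian curve of $g$, and that the nine are distinct; the $S_3$-symmetry of $g$, hence of its Hessian, reduces this to the two orbit representatives $(1:-1:0)$ and $(1+\omega^{1/3}:1+\xi\omega^{1/3}:1+\xi^{2}\omega^{1/3})$. For the latter it is convenient to observe that the line through $(1:1:1)$ and $(1:\xi:\xi^{2})$, parametrized by $(1,1,1)+t(1,\xi,\xi^{2})$, meets $C$ exactly where $t^{3}=\omega$, which already explains why the last six rows of $A_\omega$ take the stated form.
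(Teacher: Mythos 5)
Your proposal is correct, and in fact it supplies an argument the paper never writes down: the paper simply states the lemma as ``the result of that computation,'' so there is no proof to compare against, only the surrounding remark that the Hesse pencil is the key tool --- which your route makes precise. I checked the computational core: under $x=u+v+w$, $y=u+\xi v+\xi^2w$, $z=u+\xi^2v+\xi w$ one indeed gets $xyz\mapsto u^3+v^3+w^3-3uvw$, $x^3+y^3+z^3\mapsto 3(u^3+v^3+w^3)+18uvw$, and (via $e_1e_2-3e_3$) the mixed term $\mapsto 6u^3-3v^3-3w^3$, so $g\circ T=(3a+6b+1)u^3+(3a-3b+1)(v^3+w^3)+(18a-3)uvw$ as you claim; nonsingularity does force $3a+6b+1\neq 0$ (else $(1{:}0{:}0)$ is singular) and $3a-3b+1\neq0$ (else $u$ divides $g\circ T$), so $\omega=-\alpha/\beta\neq0$ and the nine base points $\{\alpha u^3+\beta(v^3+w^3)=uvw=0\}$ are distinct; and pushing them through the invertible map $T$ gives exactly the nine rows of $A_\omega$ (rows $1$--$3$ from $u=0$, rows $4$--$6$ from $w=0$, rows $7$--$9$ from $v=0$). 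Two small points you should make explicit if you write this up: first, to conclude that a smooth member of the (weighted) Hesse pencil meets its Hessian exactly in the base points, note that the Hessian cannot be proportional to the member itself (otherwise the smooth cubic would consist entirely of inflection points), so they are two \emph{distinct} members of the pencil and B\'ezout pins the intersection down to the nine base points; second, the rescaling $u\mapsto\alpha^{-1/3}u$ etc.\ needs $\alpha\beta\neq0$, which is exactly the nonvanishing you established. Your fallback argument --- verify the two $S_3$-orbit representatives lie on $g$ and on its Hessian and count --- is also sound, and the observation that the line $(1,1,1)+t(1,\xi,\xi^2)$ meets $C$ where $t^3=\omega$ is a nice explanation of the shape of rows $4$--$9$.
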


The matrix $A_\omega$ has precisely the following vanishing $3\times 3$-minors:
\begin{equation}
\label{eq:hesseconf}
 123 , \, 147 , \, 159 , \, 168 , \, 249 , \, 258 , \, 267 , \, 348 , \, 357 , \, 369 , \, 456, \, 789. 
 \end{equation}
 This list of triples is the classical {\em Hesse configuration} of  $9$ points and $12$ lines.
 
Next, for an arbitrary nonsingular cubic $f$ as in (\ref{eq:cubic}), the
nine inflection points can be expressed in radicals
in the ten coefficients $c_{300}, c_{210}, \ldots, c_{111}$,
since their Galois group is solvable \cite[\S 4]{AD}.
How can we compute these inflection points?  Consider the {\it Hesse pencil} ${\rm HP}(f)
=\{s\cdot f+s'\cdot H_f:s,s'\in K\}$ of plane cubics spanned by $f$ and its Hessian $H_f$.  Each cubic in 
${\rm HP}(f)$ passes through the nine inflection points of $f$ since both $f$ and $H_f$ do, and in fact every such cubic is in ${\rm HP}(f)$.  In particular, the four systems of three lines through the nine points are precisely the four reducible members of ${\rm HP}(f)$.  Indeed, if $g=l\cdot h\in {\rm HP}(f)$ where $l$ is a line passing through three inflection points, then $h$ passes through the remaining six and thus must itself be two lines by B\'ezout's Theorem.  So we may compute any two of the four such systems of three lines, and take pairwise intersections of their lines to obtain the nine desired inflection points.  This algorithm was extracted from Salmon's book
\cite{salmon}, and it runs in exact
arithmetic. We now make it more precise.

We introduce four unknowns $u,v,w,s$, and we consider 
the condition that a cubic $s \cdot f + H_f$ is divisible by the linear form $u x + v y + w z$.
 That condition translates into a 
system of polynomials that are cubic in the unknowns $u,v,w$ and linear in $s$.
We derive this system by specializing the following universal solution,
 found by  a {\tt Macaulay2} computation which is posted on our
supplementary materials website.

\begin{lemma}
The condition that a linear form $u x + v y + w z$ divides a
cubic (\ref{eq:cubic}) is given by a prime ideal in the polynomial ring
$K[u,v,w,c_{300}, c_{210},\ldots,c_{003}]$
in $13$ unknowns. This prime ideal is of codimension $4$ and degree $28$. It has
 $96$ minimal generators, namely $25$ quartics, $15$ quintics, $21$ sextics and $35$ octics.
\end{lemma}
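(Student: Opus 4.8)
The plan is to exhibit the ideal of the lemma as an elimination ideal attached to the tautological parametrization of reducible cubics with a linear factor, to extract primeness, codimension and degree from the geometry of that parametrization, and to obtain the precise list of minimal generators from a Gröbner basis computation.

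First I would introduce six new indeterminates $q_{200},q_{110},q_{101},q_{020},q_{011},q_{002}$ and the generic quadratic $q = q_{200}x^2+q_{110}xy+q_{101}xz+q_{020}y^2+q_{011}yz+q_{002}z^2$. Expanding $(ux+vy+wz)\cdot q$ and equating the coefficient of each monomial $x^iy^jz^k$ with $c_{ijk}$ gives ten polynomials, bilinear in $\{u,v,w\}$ and in $\{q_{\bullet}\}$, generating an ideal $J$ of the ring $K[u,v,w,q_{\bullet},c_{ijk}]$; the asserted ideal is then $I = J\cap S$, where $S = K[u,v,w,c_{300},\dots,c_{003}]$ is the $13$-variable ring of the statement, and $I$ is visibly bihomogeneous in $\{u,v,w\}$ and in $\{c_{ijk}\}$. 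Now $J$ is the ideal of the graph of the polynomial map $(u,v,w,q_{\bullet})\mapsto\bigl(u,v,w,\,[(ux+vy+wz)q]_{ijk}\bigr)$, so $K[u,v,w,q_{\bullet},c_{ijk}]/J$ is a polynomial ring, $J$ is prime, and hence so is its contraction $I$. Geometrically $V(I)$ is the Zariski closure of the image of that map; over the locus $(u,v,w)\neq 0$ this image is exactly the incidence variety $\{(\ell,f):\ell\mid f\}$ with $\ell = ux+vy+wz$, because for $\ell\neq 0$ the quadratic $f/\ell$ is uniquely determined. Since the parametrization is thus generically injective, $V(I)$ has dimension $3+6 = 9$, so $I$ has codimension $13-9 = 4$. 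This settles the primeness and codimension assertions.

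Next I would compute the degree projectively. The incidence variety $X = \{(\ell,f)\in\PP^2\times\PP^9:\ell\mid f\}$ is the image of the morphism $\PP^2\times\PP^5\to\PP^2\times\PP^9$, $(\ell,q)\mapsto(\ell,\ell q)$, which is a closed immersion (injectivity is a polynomial identity in a domain, and the source is complete). This morphism is defined by forms of bidegrees $(1,0)$ and $(1,1)$, so a standard Chow-ring computation in $\PP^2\times\PP^9$ gives $[X] = 21\,h_1^2h_2^2 + 6\,h_1h_2^3 + h_2^4$, with coefficients $\binom{7}{2},\binom{6}{1},\binom{5}{0}$; for instance $21$ is the number of members of a generic net of cubics that split off a line, and $1$ is the (linear) degree of the $\PP^5$ of cubics divisible by one fixed line. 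The projective variety $\mathrm{Proj}(S/I)\subseteq\PP^{12}$, which is the cone over $X$, then has degree equal to the sum of these multidegree coefficients, namely $21+6+1 = 28$.

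Finally, the fine data — that $I$ is minimally generated by $96$ elements, namely $25$ quartics, $15$ quintics, $21$ sextics and $35$ octics — does not appear to admit a slick description by hand, and I would read it off from an explicit Gröbner basis of $I$ (equivalently, from the low-degree part of a minimal free resolution of $S/I$), which is precisely the \texttt{Macaulay2} computation referred to in the statement and posted at the supplementary materials website. I expect this to be the only genuine obstacle: the geometry above forces primeness, codimension $4$ and degree $28$ with essentially no computation, whereas verifying that this particular list of $96$ polynomials generates $I$ minimally, and that generators occur in no other degrees, is exactly what the machine certifies. One can at least organize the lowest-degree generators via the cokernel of the multiplication-by-$\ell$ map $\mathrm{Sym}^2(K^3)^{\ast}\to\mathrm{Sym}^3(K^3)^{\ast}$, but I would not expect this route to deliver the full minimal generating set without the computer.
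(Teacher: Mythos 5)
Your proposal is correct, and it is worth noting how it relates to the paper's treatment: the paper offers no written argument at all for this lemma, simply reporting that the prime ideal and its numerical data were ``found by a \texttt{Macaulay2} computation'' (an elimination computation of exactly the kind you set up, posted on the supplementary website). So for the generator counts ($96$ minimal generators in degrees $4,5,6,8$) your route and the paper's coincide — both ultimately certify this by machine, and I agree there is no realistic way around that. What you add beyond the paper is a conceptual derivation of the qualitative claims: primeness via contraction of the prime ideal of the graph of $(u,v,w,q)\mapsto(u,v,w,\ell q)$, codimension $4$ from generic injectivity of the parametrization, and degree $28$ from the multidegree $[X]=21\,h_1^2h_2^2+6\,h_1h_2^3+h_2^4$ together with the standard fact that the total-degree of $\mathrm{Proj}(S/I)\subset\PP^{12}$ is the sum of the bidegree coefficients (provable by summing the bigraded Hilbert polynomial along $u+v=t$). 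These computations check out ($\binom{7}{2}=21$ is indeed the degree of the locus of cubics with a linear factor, and $21+6+1=28$), so your argument gives an independent, human-verifiable confirmation of the codimension and degree that the paper only asserts. Two small quibbles that do not affect correctness: the map $\PP^2\times\PP^5\to\PP^2\times\PP^9$ being a \emph{closed immersion} requires injectivity on tangent spaces, not just injectivity plus properness, but generic injectivity is all you actually use for the class computation; and calling $\mathrm{Proj}(S/I)$ ``the cone over $X$'' is loose (it is the join-type variety swept out by the lines joining $(x,0)$ and $(0,y)$ for $(x,y)\in X$), though the sum-of-multidegrees formula you invoke applies verbatim.
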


Consider the polynomials in $u,v,w,s$ that are obtained by specializing the
$c_{ijk}$ in the
$96$ ideal generators above to the coefficients of $s \cdot f + H_f$.
After permuting coordinates if necessary, we may set $w=1$
and work with the resulting polynomials in $u,v,s$.
The lexicographic Gr\"obner basis of their ideal 
 has the  special form
$$ 
\bigl\{\,
\underline{s^4} + \alpha_2 s^2 + \alpha_1 s + \alpha_0 \, ,  \,\,
\underline{v^3} + \beta_2 (s) v^2 + \beta_1(s) v + \beta_0 (s) \, , \,\,
\underline{u} + \gamma(s,v) \,
\bigr\},
$$
where the $\alpha_i$ are constants in $K$,
the $\beta_j$ are univariate polynomials,
and $\gamma$ is a bivariate polynomial.
These equations have $12$ solutions 
$ (s_i,u_{ij},v_{ij}) \in (\overline{K})^3$ where $i=1,2,3,4$ and $j=1,2,3$.
The  leading terms in the Gr\"obner basis reveal that 
the coordinates of these solutions can be expressed in radicals over $K$, since we need only solve a quartic in $s$, a cubic in $v$, and a degree 1 equation in $u$, in that order.

For each of the nine choices of $j,k \in \{1,2,3\}$, the two linear equations
$$ u_{1j} x + v_{1j} y + z \, = \, u_{2k} x + v_{2k} y + z \,\, = \,\,  0 $$
have a unique solution $(b^{jk}_1: b^{jk}_2: b^{jk}_3)$
in the projective plane $\PP^2 $ over $\overline{K}$.
We can write its coordinates $b^{jk}_l$ in radicals over $K$.
Let $B$ denote the $9 \times 3$-matrix 
whose rows are the vectors $(b^{jk}_1, b^{jk}_2, b^{jk}_3)$ for $j,k \in \{ 1,2,3\}$.
While the entries of  $B$ have been
  written in radicals over $K$, they  can also be represented as 
 formal series in the completion of 
$\overline{K}$, which we can approximate by a suitable truncation.

To summarize our discussion up to this point:
we have shown how to compute the inflection points of a plane cubic,
and we have written them as the rows of a $9 \times 3$-matrix $B$
whose entries are expressed in radicals over $K$.
For the special case of symmetric cubics, the 
specific $9 \times 3$-matrix $A_\omega$ in (\ref{eq:Aomega})
gives the inflection points.

\smallskip

Now, we return to our main goal. Suppose we are given a 
nonsingular ternary cubic $f$ whose $j$-invariant $\iota = j(f)$ has negative valuation.
We then choose  $a,b \in \overline{K}$ as prescribed in Proposition \ref{thm1}, and we define $\omega$ by the ratio in (\ref{eq:uratio}).  The scalars $a$ and $b$ define a symmetric honeycomb cubic $g$ as in (\ref{eq:specialf1}).
Let $\mathcal{A}_\omega$ and $\mathcal{B}$ denote the sets of
inflection points of the cubic curves $V(g)$ and $V(f)$ respectively.
Thus  $\mathcal{A}_\omega$ and $\mathcal{B}$ 
are unordered $9$-element subsets of $\PP^2$,
represented by the rows of
our matrices $A_\omega$ and $B$. There exists an automorphism $\phi$ of $\mathbb{P}^2$ taking $V(f)$ to $V(g)$, since their $j$-invariants agree.  Clearly, any such automorphism $\phi$  takes 
$\mathcal{B}$ to $\mathcal{A}_\omega$.

We write $\mathcal{B} = \{b_1,b_2,\ldots,b_9\}$, where
the labeling is such that $b_i,b_j,b_k$ are collinear in $\PP^2$
if and only if $ijk$ appears on the list (\ref{eq:hesseconf}).
The automorphism group of the
Hesse configuration   (\ref{eq:hesseconf})  has order  $9\cdot8\cdot 6=432$.
 Hence precisely $432$ of
 the $9!$ possible bijections $\mathcal{B} \rightarrow \mathcal{A}_\omega$
respect the collinearities  of the inflection points.
For each such bijection  $\pi_i:  \mathcal{B} \rightarrow \mathcal{A}_\omega$,
$i=1,2,\ldots,432$, we associate a unique projective transformation
$\sigma_i : \PP^2 \rightarrow \PP^2$ by requiring that
$\sigma_i(b_1) = \pi_i(b_1)$,
$\sigma_i(b_2) = \pi_i(b_2)$,
$\sigma_i(b_4) = \pi_i(b_4)$ and
$\sigma_i(b_5) = \pi_i(b_5)$.  We emphasize that $\sigma_i$ may or may not induce a bijection $\mathcal{B} \rightarrow \mathcal{A}_\omega$ on all nine points.
We write $M_i$
for the unique (up to scaling) $3 \times 3$-matrix with
entries in $\overline{K}$ that represents 
the projective transformation $\sigma_i$. 

The simplest version of our algorithm constructs all  matrices
$M_1,M_2,\ldots,M_{432}$. 
One of these matrices, say $M_j$, represents the
 automorphism $\phi$ of $\mathbb{P}^2$
in the second-to-last paragraph.
 The ternary cubics $f\circ M_j$ and $ g$  are equal
up to a scalar.
To find such an index $j$, we simply check,   for each $j \in \{1,2,\ldots,432\}$,
    whether $f\circ M_j$ is in symmetric honeycomb form. 
    The answer will be affirmative for at least one index $j$,
    and we set $M = M_j$.
    This  resolves the question raised 
by Markwig and Buchholz  \cite[\S 6]{buch}.
The following theorem summarizes the problem and our solution.

\begin{theorem} \label{thm:arnehanna}
Let $f$ be a nonsingular cubic with ${\rm val}(j(f)) < 0 $. If  $M$ is
the $3 {\times} 3$-matrix over $\overline{K}$ constructed above then
 $f \circ M$ is a symmetric honeycomb cubic.
\end{theorem}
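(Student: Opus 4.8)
The plan is to show that the verification loop in the construction succeeds, i.e., that at least one of the $432$ matrices $M_i$ has the property that $f\circ M_i$ is in symmetric honeycomb form; the theorem is then immediate, since ``in symmetric honeycomb form'' is precisely the meaning of ``symmetric honeycomb cubic.''

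First I would nail down the existence of the transformation $\phi$. By Proposition~\ref{thm1} the chosen scalars $a,b$ give a symmetric cubic $g$ with $j(g)=\iota=j(f)$, and $g$ is nonsingular since $\iota\in K$ is finite. Because the $j$-invariant generates the field of rational $\mathrm{GL}(3,\overline K)$-invariants on $\PP^9$ and the stabilizer of a nonsingular cubic in $\mathrm{PGL}(3,\overline K)$ is finite, the nonsingular cubics with a fixed value of $j$ form a single $\mathrm{PGL}(3,\overline K)$-orbit; hence there is a projective transformation $\phi$, represented by a matrix $N$ over $\overline K$, with $f\circ N = c\,g$ for some $c\in\overline K^{*}$. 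This is the automorphism referred to in the paragraphs preceding the theorem.

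Next I would check that $\phi$ is in fact one of the $\sigma_i$. Since $\phi$ is projective it sends inflection points of $V(f)$ to inflection points of $V(g)$ and preserves collinearity, so it induces a bijection $\mathcal B\to\mathcal A_\omega$ compatible with the Hesse configuration~(\ref{eq:hesseconf}); thus this induced bijection is one of the $432$ maps $\pi_i$, say $\pi_{i_0}$. Inspecting~(\ref{eq:hesseconf}) shows that none of the triples $124$, $125$, $145$, $245$ appears, so $b_1,b_2,b_4,b_5$ are a projective frame of $\PP^2$, and the same holds for their $\pi_{i_0}$-images because $\pi_{i_0}$ and its inverse preserve collinearity. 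A projective transformation is determined by its values on a projective frame, so $\phi$ is the unique map sending $b_1,b_2,b_4,b_5$ to their $\pi_{i_0}$-images, which is exactly how $\sigma_{i_0}$ was defined; therefore $\phi=\sigma_{i_0}$, the matrix $M_{i_0}$ equals $N$ up to scaling, and $f\circ M_{i_0}=c'\,g$ for some $c'\in\overline K^{*}$.

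Finally, the nine ratios in~(\ref{eq:sixratios})--(\ref{eq:threeratios}) are unchanged when all coefficients $c_{ijk}$ are multiplied by a common nonzero scalar, so $f\circ M_{i_0}$ is in symmetric honeycomb form because $g$ is (by the choice of $a,b$ in Proposition~\ref{thm1}). Hence the loop returns an affirmative index $j$ --- any affirmative $j$ will do, not necessarily $i_0$ --- and $M=M_j$ makes $f\circ M$ a symmetric honeycomb cubic. The one point I would handle with care is the second half of the third paragraph: the $\sigma_i$ are specified only by a four-point condition and need not extend to bijections of all nine inflection points, so the argument must show that the genuine automorphism $\phi$ is nonetheless recovered by that recipe; this is exactly where the general-position observation drawn from~(\ref{eq:hesseconf}) is used.
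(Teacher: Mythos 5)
Your proposal is correct and follows essentially the same route as the paper: existence of a projective equivalence $\phi$ from equal $j$-invariants, identification of $\phi$ with one of the $432$ maps $\sigma_i$ via the Hesse configuration, and scale-invariance of the honeycomb conditions, so the verification loop must succeed. Your explicit check that $b_1,b_2,b_4,b_5$ form a projective frame (no triple among $124,125,145,245$ appears in (\ref{eq:hesseconf})) is a detail the paper leaves implicit, but it does not change the argument.
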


Next, we discuss a refinement of the algorithm above that reduces the number of matrices to check 
from $432$ to $12$.  It takes advantage of the detailed description of the 
Hessian group in \cite{AD}.
Given a plane cubic $f$, the {\it Hessian group} $G_{216}$ consists of those linear automorphisms of $\mathbb{P}^2$ that preserve the pencil ${\rm HP}(f)$.  This group was first described by C.~Jordan in \cite{jordan}.  The elements of $G_{216}$ naturally act on the subset
$\mathcal{A}_\omega$ of $\mathbb{P}^2$ given by the rows
$a_1,a_2,\ldots,a_9$ of $A_\omega$.
  Of the $432$ automorphisms of (\ref{eq:hesseconf}), precisely half are realized by the action of $G_{216}$.  
    The group $G_{216}$ is isomorphic to the semidirect product $(\mathbb{Z}/3\mathbb{Z})^2
     \rtimes {\rm SL}(2,3)$.  The first factor sends $f$ to itself and permutes 
    $\mathcal{A}_\omega$ transitively.  The second factor, of order $24$, 
 sends $f$ to each of the $12$ cubics in ${\rm HP}(f)$ isomorphic to it.  The quotient of ${\rm SL}(2,3)$ by the 2-element stabilizer of $f$ is isomorphic to 
${\rm PSL}(2,3)$, with $12$ elements. 
Identifying $G_{216}$ with the subgroup of $S_9$ permuting $\mathcal{A}_\omega$,
a set of coset representatives for ${\rm PSL}(2,3)$ inside $G_{216}$ 
consists of the following $12$ permutations (in cycle notation):
\begin{equation}
\label{eq:cosets}
\begin{matrix} {\rm id},\,\, (456)(987), \,\, (654)(789),\,\,
 (2437)(5698), \,\, (246378)(59),\,\,
(254397)(68)\\
(249)(375), 
(258)(963), (2539)(4876),
(852)(369), (287364)(59), (2836)(4975)
\end{matrix}
\end{equation}
With this notation, an example of an automorphism of the Hesse configuration  (\ref{eq:hesseconf}) 
that is  not realized by the Hessian group $G_{216}$ is the permutation $\tau=(47)(58)(69)$.

Here is now our refined algorithm for the last step towards 
Theorem \ref{thm:arnehanna}. Let $f,g,\mathcal{B},
 \mathcal{A}_\omega$ be given
  as above. For any automorphism $\rho$ of (\ref{eq:hesseconf}),
we denote by $\phi_\rho:\mathbb{P}^2\rightarrow\mathbb{P}^2$ the projective transformation 
$\,(b_1, b_2, b_4, b_5) \,\mapsto \, (a_{\rho(1)},a_{\rho(2)},a_{\rho(4)},a_{\rho(5)})$.  
To find a transformation from $f$ to $g$, we proceed as~follows.  
First, we check to see whether $\phi_{\rm id}$ maps 
$\mathcal{B}$ to $\mathcal{A}_\omega$.  (It certainly maps four elements of 
$\mathcal{B}$ to $\mathcal{A}_\omega$ but maybe not all nine).  
If $\phi_{\rm id}(\mathcal{B}) = \mathcal{A}_\omega$ then $\phi_{\rm id}(f)$ is in the
Hesse pencil ${\rm HP}(g)$. This implies that one of the
 $12$ maps $\phi_{\sigma}$, where $\sigma$ runs over (\ref{eq:cosets}),
  takes $f$ to $g$. 
    If $\phi_{\rm id}(\mathcal{B}) \not= \mathcal{A}_\omega$
   then $\phi_{\tau}$ must map $\mathcal{B}$ to 
   $\mathcal{A}_\omega$, since $G_{216}$ has index $2$ in 
   the automorphism group of the Hesse configuration
      and $\tau$ represents the nonidentity coset.  Then one of the $12$ maps $\phi_{\tau\sigma}$,
      where $\sigma$ runs over (\ref{eq:cosets}),  takes $f$ to $g$. 
In either case, after computing $\phi_{\rm id}$, we only have to check $12$ maps, and one of them will work.
 
 We close with two remarks. First, the set of matrices 
$M\in {\rm GL}(3,\overline{K})$ that send a given cubic $f$ into 
honeycomb form is a rigid analytic variety, since the conditions on the entries of $M$ are inequalities in valuations of polynomial expressions therein.  It would be interesting to study this space further.

The second remark concerns
  the arithmetic nature of the output of our algorithm.
The entries of  the matrix $M$  were constructed to
be expressible in radicals over $K(\omega)$, with $\omega$ as in
(\ref{eq:uratio}).
However, as it stands, we do not know whether they can be expressed in radicals
over the ground field $K$. The problem lies in the application of Proposition~\ref{thm1}. Our first step was to chose a scalar $a \in K$ whose valuation is large enough.
Thereafter, we computed $b$ by solving a univariate equation 
of degree $12$. This equation is generally irreducible
with non-solvable Galois group. Perhaps it is possible to
choose $a$ and $b$ simultaneously, in radicals over $K$,
so that $(a,b)$ lies on the curve (\ref{eq:mustsolve2}), but at present,
we do not know how to make~this~choice.

\section{Parametrization and Implicitization}

A standard task of computer algebra is to go back and forth
between parametric and implicit representations of algebraic varieties.
Of course, these transformations  are most transparent
when the variety is rational. If the variety is not
unirational then parametric representations typically involve transcendental functions.
In this section, we use nonarchimedean  theta functions to parametrize
planar cubics, we demonstrate how to implicitize this parametrization,
and we derive an intrinsic characterization of honeycomb cubics in terms of
their nonarchimedean geometry.

In this section we assume that $K$ is an algebraically closed field which is complete with respect to a nonarchimedean valuation.  Fix a scalar $\iota \in K$ with $\val(\iota)<0$.  
According to Tate's classical
 theory \cite{silverman},  
 the unique elliptic curve $E$ over $K$ with $j(E)=\iota$
is analytically isomorphic to $\,K^*/{q^{\mathbb{Z}}}$, where $q \in K^*$ is a particular scalar
with ${\rm val}(q) > 0$, called the {\em Tate parameter} of $E$. The symbol
 $q^{\mathbb{Z}}$ denotes the multiplicative group generated by $q$.  The Tate parameter of $E$ is determined 
 from the $j$-invariant by inverting the power series relation
\begin{equation}
\label{eq:qseries}
j\,\,=\,\,{1\over q} +744 +196884q + 21493760 q^2 + 
 \cdots 
\end{equation}
This relation can be derived and computed to arbitrary precision from the identity
$$j = \frac{(1-48a_4(q))^3} {\Delta(q)}, $$
where the invariant
$a_4$ and the discriminant $\Delta$ are given by
$$a_4(q) \,\, = \,\, -5 \sum_{n \geq 1} \frac{n^3q^n}{1-q^n} 
\qquad \textrm{and} \qquad \Delta (q) \,\,= \,\,q \prod_{n\geq 1} (1-q^n)^{24}.$$
We refer to Silverman's text book \cite{silverman} for an introduction 
to the relevant theory of elliptic curves,
and  specifically to  \cite[Theorems V.1.1, V.3.1]{silverman} for the above results.

Our aim in this section is to work directly with the analytic representation
 $$ E \,\, = \,\,  K^*/{q^{\mathbb{Z}}}, $$
 and to construct its honeycomb embeddings into the plane $\PP^2_K$.
 In our explicit computations, scalars in $K$ are presented as truncated power series
 in a uniformizing parameter. The arithmetic is numerical rather than symbolic.
Thus, this section connects
 the emerging fields of tropical geometry and
  numerical algebraic geometry.

 By a {\it holomorphic function} on $K^*$ we mean a formal Laurent series $\sum a_n x^n$ which converges for every $x\in K^*$.  A {\it meromorphic function} is a ratio of two holomorphic functions; they have a well-defined notion of zeroes and poles as usual.  A {\it theta function} on $K^*$, relative to the subgroup $q^\mathbb{Z}$, is a meromorphic function on $K^*$ whose divisor is periodic with respect to $q^\mathbb{Z}$.  Hence theta functions on $K^*$ represent divisors on $E$.  The  {\em fundamental theta function} $\Theta: K^* \rightarrow K$ is 
 defined by
$$\Theta(x) \,\,= \,\, \prod_{n>0}(1-q^nx)\prod_{n\geq 0} (1-{q^n\over x}).$$ 
 Note that $\Theta$ has a simple zero at the identity of $E$ and no other zeroes or poles.
Furthermore, given  any $a \in K^*$, we define the shifted theta function 
$$\Theta_a(x) \,\, = \, \, \Theta(x/a).$$ 
The function 
$\Theta_a$ represents the divisor $[a]\in {\rm Div} E$, where $[a]$ denotes the point of
the elliptic curve $E$ represented by $a$.  One can also check that $\Theta_a(x/q) = -{x\over a} \Theta_a(x).$

Now suppose $D= n_1p_1 + \dots +n_sp_s$ is a divisor on $E $
 that satisfies $\deg(D)=0$ and $p_1^{n_1}\cdots p_s^{n_s}=1$, as an equation in the multiplicative group $K^*/q^{\mathbb{Z}}$.  We can use theta functions to exhibit $D$ as a principal divisor, as follows.  Pick lifts $\tilde{p}_1, \dots, \tilde{p}_s \in K^*$ of $p_1, \ldots, p_s$, respectively, such that $\tilde{p}_1^{n_1}\cdots \tilde{p}_s^{n_s} =1$ as an equation in $K^*$. Let
$$ f(x)  \,\,= \,\, \Theta_{  \tilde{p}_1}(x)^{n_1} \cdots  \Theta_{  \tilde{p}_s}(x)^{n_s}.$$
This defines a function $f : K^* \rightarrow K$ that is $q$-periodic because
$$ f(x/q) \,\,= \,\,\frac{(-x)^{n_1 + \dots +n_s}}{ \tilde{p}_1^{n_1}\cdots \tilde{p}_s^{n_s}} f(x) 
\,\ = \,\, (-x)^{n_1 + \dots +n_s} f(x)
\,\, = \,\, f(x). $$
The last equation holds because we assumed that
${\rm deg}(D) = n_1 + \cdots + n_s$ is zero.
We conclude that $f$
descends to a meromorphic function on $K^*/q^{\mathbb{Z}}$ with divisor $D$.

We now present a parametric representation of plane cubic curves
that will work well for honeycombs.
In what follows, we write $(z_0:z_1:z_2)$ for the coordinates on
$\mathbb{P}^2$.
Fix scalars $a,b,c,p_1,\ldots,p_9 $ in $K^*$ that  satisfy the conditions
\begin{equation}
\label{eq:transpara}
p_1p_2p_3 \,\,=\,\,p_4p_5p_6 \,\, = \,\, p_7p_8p_9
\quad \hbox{and} \quad
p_i/p_j\not\in q^\Z \,\textrm{ for }\,i\ne j.
\end{equation}
The following defines a map from $E = K^*/q^{\mathbb{Z}}$ into
the projective plane $\mathbb{P}^2$ as follows:
\begin{equation}
\label{eq:param}
x \,\,\,\,\mapsto \,\,\,
\bigl( a\cdot \T_{p_1}\T_{p_2}\T_{p_3}(x) 
:b\cdot \T_{p_4}\T_{p_5}\T_{p_6}(x)
:c\cdot \T_{p_7}\T_{p_8}\T_{p_9}(x)  \bigr).
\end{equation}
This map embeds the elliptic curve 
$E = K^*/q^{\mathbb{Z}}$ analytically
 as a plane cubic:

\begin{lemma}
\label{thm:analytic1}
If the image of the map (\ref{eq:param}) has three distinct intersection points with each
of the three coordinate lines $\{z_i = 0\}$, then it is a cubic curve in $\mathbb{P}^2$.
Every nonsingular  cubic 
with this property and having Tate parameter $q$
  arises this~way.
\end{lemma}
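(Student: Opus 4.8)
**The plan is to prove both assertions by analyzing the three theta-quotient functions that arise from the map (\ref{eq:param}), using the divisor-theoretic machinery just developed.**

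\medskip

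\textbf{Proof sketch.}
First I would establish that the image is a cubic. Let $\phi: E \to \PP^2$ be the map in (\ref{eq:param}), and write $F_0 = a\cdot\T_{p_1}\T_{p_2}\T_{p_3}$, $F_1 = b\cdot\T_{p_4}\T_{p_5}\T_{p_6}$, $F_2 = c\cdot\T_{p_7}\T_{p_8}\T_{p_9}$. The key computation is that each $F_i(x/q) = (-x)^3 \kappa_i F_i(x)$ for the \emph{same} constant $\kappa_i = (\tilde{p}_1\tilde{p}_2\tilde{p}_3)^{-1}$, independently of $i$ — this is exactly where the hypothesis $p_1p_2p_3 = p_4p_5p_6 = p_7p_8p_9$ in (\ref{eq:transpara}) is used (after choosing lifts so the products agree in $K^*$, not just mod $q^\Z$). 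Hence any $K$-linear combination $\lambda_0 F_0 + \lambda_1 F_1 + \lambda_2 F_2$ transforms by the same factor $(-x)^3\kappa_i$, so it descends to a well-defined section of a line bundle of degree $3$ on $E$; equivalently, each coordinate function vanishes on a divisor of degree $3$. To see the image lies on a cubic, I count: the space of ternary cubics restricted to the image is cut out by pulling back to functions on $E$ of the form $\phi^*(\text{cubic})$, which live in the $9$-dimensional space spanned by the degree-$9$ products $F_{i}F_{j}F_{k}$; but a complete linear system of degree $9$ on the elliptic curve $E$ has dimension exactly $9$, so there must be a linear relation among the $10$ monomials of degree $3$ in $z_0,z_1,z_2$ pulled back — that relation is the cubic. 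The hypothesis that each coordinate line $\{z_i = 0\}$ meets the image in three distinct points guarantees $\phi$ is generically injective onto its image (a degree-$3$ map to $\PP^1$ by projection has the right fiber structure) and that the image is an honest cubic curve rather than a curve of lower degree traversed multiply, or a conic-plus-line; so the image is an irreducible plane cubic.

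\medskip

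For the converse, let $C \subset \PP^2$ be any nonsingular cubic with Tate parameter $q$, so $C \cong E = K^*/q^\Z$ analytically. I would produce the data $a,b,c,p_1,\dots,p_9$ realizing $C$ via (\ref{eq:param}). The coordinate line $\{z_0 = 0\}$ meets $C$ in a divisor $D_0 = [p_1] + [p_2] + [p_3]$ of degree $3$, and likewise $\{z_1=0\}$ gives $[p_4]+[p_5]+[p_6]$ and $\{z_2=0\}$ gives $[p_7]+[p_8]+[p_9]$; all three are hyperplane sections, hence linearly equivalent, which forces $p_1p_2p_3 = p_4p_5p_6 = p_7p_8p_9$ in $K^*/q^\Z$ — precisely the first condition in (\ref{eq:transpara}) — and the $p_i$ are distinct mod $q^\Z$ because we may choose generic coordinates so the line sections are reduced, giving the second condition. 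Now the divisor $D_0 - D_1$ has degree $0$ and is principal (it is $\mathrm{div}(z_0/z_1)$ as a rational function on $C$), so by the theta-function construction just given there is a meromorphic $q$-periodic function $f_{01}$ with divisor $D_0 - D_1$; similarly $f_{02}$ with divisor $D_0 - D_2$. Then $z_1/z_0 = \lambda_1 f_{01}^{-1}$ and $z_2/z_0 = \lambda_2 f_{02}^{-1}$ for scalars $\lambda_i \in K^*$ (two rational functions with the same divisor differ by a constant), and unwinding the theta-function expression for $f_{01}, f_{02}$ yields exactly a representation of the form (\ref{eq:param}) with suitable $a,b,c$. One must check the lifts $\tilde p_i$ can be chosen compatibly so that the products agree on the nose in $K^*$; this is possible after rescaling since the products agree mod $q^\Z$.

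\medskip

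\textbf{Main obstacle.} The delicate point — and the step I would spend the most care on — is the bookkeeping of \emph{lifts}: all the $q$-periodicity cancellations in the forward direction, and the matching of divisors in the converse, only work after the lifts $\tilde p_1,\dots,\tilde p_9 \in K^*$ of the points $p_i \in K^*/q^\Z$ are chosen so that $\tilde p_1\tilde p_2 \tilde p_3 = \tilde p_4 \tilde p_5 \tilde p_6 = \tilde p_7 \tilde p_8 \tilde p_9$ holds as an equality in $K^*$ (not merely up to a power of $q$), using the transformation law $\Theta_a(x/q) = -(x/a)\Theta_a(x)$ established above. Verifying that such a simultaneous choice exists, and that the resulting three homogeneous coordinates transform by a common automorphy factor so that the map to $\PP^2$ is genuinely well-defined on $E$, is the technical heart of the argument; everything else is a degree count and the standard fact that a rational function on an elliptic curve is determined by its divisor up to a scalar.
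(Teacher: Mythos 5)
Your argument is correct and follows essentially the same route as the paper: the forward direction rests on the Riemann--Roch dimension count (a degree-$3$ linear system, with the ten pulled-back cubic monomials forced into the $9$-dimensional space of degree-$9$ sections, and the distinct-intersection hypothesis ruling out a degenerate image), and the converse identifies the three coordinate-line divisors as linearly equivalent hyperplane sections, adjusts the lifts $\tilde p_i$ by powers of $q$ so the products agree in $K^*$, and invokes uniqueness up to scalar of a meromorphic function with prescribed divisor, realized by theta quotients. The only cosmetic difference is that you keep the three coordinate functions homogeneous and track a common automorphy factor, whereas the paper dehomogenizes and treats $f$, $g$, $1$ as a basis of $L(p_7+p_8+p_9)$; both hinge on the same facts.
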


\begin{proof}
By construction, the following two functions $K^* \rightarrow K$ are $q$-periodic:
\begin{equation}
\label{eq:f_and_g}
f(x) \,\, = \,\,
\frac{a\cdot \T_{p_1}\T_{p_2}\T_{p_3}(x)}{c\cdot \T_{p_7}\T_{p_8}\T_{p_9}(x)}
\quad \hbox{and} \quad
g(x) \,\, = \,\,
\frac{b\cdot \T_{p_4}\T_{p_5}\T_{p_6}(x)}{c\cdot \T_{p_7}\T_{p_8}\T_{p_9}(x)}.
\end{equation}
Hence $f$ and $g$ descend to meromorphic functions on the
elliptic curve $E= K^*/q^{\mathbb{Z}}$. The map (\ref{eq:param})
can be written as $\,x \mapsto (f(x): g(x) : 1) \,$ and this
defines a map from $E$ into $\PP^2$. The divisor
$D = p_7 + p_8 + p_9$ on $E$ has degree $3$.
By the Riemann-Roch argument in \cite[Example 3.3.3]{hartshorne},
its space of sections $L(D)$ is $3$-dimensional.
Moreover, the assumption about having three distinct intersection points
 implies that the meromorphic functions $f,g$ and $1$ form a
basis of the vector space $L(D)$.
The image of $E$ in
$\PP(L(D)) \simeq \PP^2$ is a cubic curve because
$L(3D)$ is $9$-dimensional. 

For the second statement, we take $C$ to be any nonsingular cubic curve
with Tate parameter $q$ that has distinct intersection points with the three coordinate lines
in $\mathbb{P}^2$.
There exists a morphism $\phi$ from the abstract elliptic curve 
$E= K^*/q^{\mathbb{Z}}$ into $\PP^2$ whose image equals $C$.
Let $\{p_1,p_2,p_3\}$, $\{p_4,p_5,p_6\}$
and $\{p_7,p_8,p_9\}$ be the
preimages under $\phi$ of the triples 
$C \cap \{z_0=0\}$, $C \cap \{z_1=0\}$
and $C \cap \{z_2=0\}$ respectively.
The divisors
$D_1 = p_1 + p_2 + p_3 - p_7-p_8-p_9$
and $D_2 = p_4+ p_5+p_6 - p_7-p_8-p_9$
are principal, and hence
$\, \frac{p_1 p_2 p_3}{p_7p_8p_9}  =
\frac{p_4 p_5 p_6}{p_7p_8p_9}  = 1 \,$
in the multiplicative group
$E= K^*/q^{\mathbb{Z}}$.
We choose preimages 
$\tilde p_1, \tilde p_2,\ldots,\tilde p_9 $ in $K^*$ such that
(\ref{eq:transpara}) holds for these scalars.

Our map $\phi$ can be written in the form
$x \mapsto (f(x):g(x):1)$ where ${\rm div}(f) = D_1$ and ${\rm div}(g) = D_2$.
By the Abel-Jacobi Theorem (cf.~\cite[Proposition 1]{roquette}),
the function $f$  is uniquely determined, up to a multiplicative
scalar, by the property ${\rm div}(f) = D_1$, and similarly for $g$ and $D_2$.
Then there exist $\gamma_1,\gamma_2 \in K^*$ such that
$$
f(x) \,\, = \,\,
\gamma_1 \frac{\T_{p_1}\T_{p_2}\T_{p_3}(x)}{ \T_{p_7}\T_{p_8}\T_{p_9}(x)}
\quad \hbox{and} \quad
g(x) \,\, = \,\, 
\gamma_2
\frac{\T_{p_4}\T_{p_5}\T_{p_6}(x)}{\T_{p_7}\T_{p_8}\T_{p_9}(x)}.
$$
We conclude that $\phi$ has the form
(\ref{eq:param}).
\end{proof}

It is a natural ask to what extent the
parameters in the representation (\ref{eq:param})  of a plane cubic are unique. 
The following result answers this question.

\begin{proposition} \label{prop:whensame}
Two vectors $(a,b,c,p_1,\ldots,p_9)$ and $(a',b',c',p'_1,\ldots,p'_9)$ 
in $(K^*)^{12}$, both satisfying
(\ref{eq:transpara}), define the same plane cubic if and only if
the latter vector can be obtained from the former by combining the following operations:
\begin{enumerate}
\item[(a)] Permute the sets $\{p_1,p_2,p_3\}$, $\{p_4,p_5,p_6\}$ and $\{p_7,p_8,p_9\}$.
\item[(b)] Scale each of $a,b$ and $c$ by the same multiplier $\lambda \in K^*$.
\item[(c)] Scale each $p_i$ by the same multiplier $\lambda \in K^*$.
\item[(d)] Replace each $p_i$ by its multiplicative inverse $1/p_i$.
\item[(e)] Multiply each $p_i$ by $q^{n_i}$ for some $n_i \in \mathbb{Z}$,
where $n_1{+}n_2{+}n_3=
n_4{+}n_5{+}n_6=n_7{+}n_8{+}n_9$,
and  set $a' = p_1^{n_1}p_2^{n_2}p_3^{n_3}a$,
$b' = p_4^{n_4}p_5^{n_5}p_6^{n_6}b$,
$c' = p_7^{n_7}p_8^{n_8}p_9^{n_9}c$.
\end{enumerate}
\end{proposition}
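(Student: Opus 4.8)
The plan is to prove the two implications separately. The ``if'' direction is a direct check that each of the operations (a)--(e) leaves the image of the parametrization (\ref{eq:param}) unchanged; the ``only if'' direction combines the fact that each such parametrization of a smooth cubic is an isomorphism $E \xrightarrow{\sim} C$, so that two of them differ by an automorphism of $E$, with the fact that, since $\val(j) < 0$, the group $\mathrm{Aut}(E)$ is as small as possible.

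For the ``if'' direction: operation (a) does not change the three products $\Theta_{p_1}\Theta_{p_2}\Theta_{p_3}$, $\Theta_{p_4}\Theta_{p_5}\Theta_{p_6}$, $\Theta_{p_7}\Theta_{p_8}\Theta_{p_9}$, and (b) rescales all three homogeneous coordinates by a common factor, so in both cases (\ref{eq:param}) is literally unchanged. For (c), the identity $\Theta_p(\lambda x) = \Theta_{p/\lambda}(x)$ identifies the new map with the old one precomposed with the translation $x \mapsto \lambda x$ of $E$. For (d), the functional equation $\Theta(1/x) = -x\,\Theta(x)$ identifies the new map with the old one precomposed with the inversion $x \mapsto 1/x$; the monomial $(-x)^3$ produced in each coordinate cancels in $\mathbb P^2$ precisely because $p_1p_2p_3 = p_4p_5p_6 = p_7p_8p_9$. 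For (e), repeated use of $\Theta_a(x/q) = -(x/a)\Theta_a(x)$ rewrites each $\Theta_{q^{n_i}p_i}(x)$ as a monomial in $x$ times $\Theta_{p_i}(x)$; the balancing condition $n_1{+}n_2{+}n_3 = n_4{+}n_5{+}n_6 = n_7{+}n_8{+}n_9$ makes the three new products differ from the old ones by monomials of equal degree in $x$, and the rescaling of $a,b,c$ is designed to absorb the remaining scalar factors, so the point of $\mathbb P^2$ is unchanged. (One also checks that (\ref{eq:transpara}) is preserved in each case.)

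For the ``only if'' direction, suppose the two vectors define the same plane cubic $C$. By Lemma \ref{thm:analytic1} and the Riemann--Roch argument in its proof, each of the two maps (\ref{eq:param}) is given by the complete linear system of a degree-$3$ divisor on $E$, hence is a closed embedding onto a smooth plane cubic; so both are isomorphisms onto $C$ and differ by an automorphism $\tau$ of the abstract curve $E$. Since $\val(j) < 0$ we have $j \neq 0, 1728$, so $\mathrm{Aut}(E)$ consists exactly of the maps $x \mapsto \lambda x^{\pm 1}$. By the computations of the ``if'' direction, precomposition with such a $\tau$ is realized by operation (c), or by operations (c) and (d) when the exponent is $-1$; after performing these we may assume the two vectors induce the \emph{same morphism} $E \to \mathbb P^2$. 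Now the unordered triple $\{[p_1],[p_2],[p_3]\} \subset E$ is intrinsically the preimage of $C \cap \{z_0 = 0\}$, since $\Theta_{p_i}$ has a simple zero at $[p_i]$ and nowhere else, and similarly for the other two triples; so after operation (a) we may assume $p'_i = q^{n_i} p_i$ for integers $n_i$, and comparing the two instances of (\ref{eq:transpara}), together with the fact that $q$ is not a root of unity, forces $n_1{+}n_2{+}n_3 = n_4{+}n_5{+}n_6 = n_7{+}n_8{+}n_9$. Operation (e) then applies, and after it the two vectors have identical $p$-coordinates. Finally, the three cubic theta products are linearly independent over $K$ (in the proof of Lemma \ref{thm:analytic1} this is the statement that $f$, $g$ and $1$ are a basis of $L(D)$), so equality of the two morphisms forces $(a':b':c')$ proportional to $(a:b:c)$, which is operation (b). Hence the list is complete.

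The step I expect to be the main obstacle is the calculation behind (e) in the ``if'' direction: one must track the monomial prefactors coming from the functional equation for $\Theta$ and verify that, under the balancing condition on the $n_i$ together with the stated rescaling of $a,b,c$, they combine into a single scalar common to all three homogeneous coordinates. Everything else (the trivial operations, the reduction to an automorphism of $E$, the minimality of $\mathrm{Aut}(E)$ since $\val(j)<0$, and the recovery of the classes $[p_i]$ from $C \cap \{z_i = 0\}$) is routine.
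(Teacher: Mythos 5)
Your proposal is correct and follows essentially the same route as the paper: the ``if'' direction by direct verification of (a)--(e) from the functional equations of $\Theta$ (translation, inversion via $\Theta(1/x)=-x\,\Theta(x)$ with the prefactors cancelling projectively thanks to (\ref{eq:transpara}), and $q$-shifts via $\Theta_a(x/q)=-(x/a)\Theta_a(x)$), and the ``only if'' direction by noting the two parametrizations differ by an automorphism of $E$, which for $j\neq 0,1728$ is a translation possibly composed with $x\mapsto 1/x$, hence realized by (c),(d), after which matching the zero sets on the coordinate lines and the scalars yields (a), (e), (b). Your write-up is in fact a bit more explicit than the paper's at the final step (deriving $p'_i=q^{n_i}p_i$ and the balancing of the $n_i$ from (\ref{eq:transpara})), but it is the same argument.
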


\begin{proof}
Clearly the relabeling in (a) and the scaling in (b) preserve the curve $C \subset \PP^2$. For (c), we note
 that scaling each $p_i$ by the same constant $\lambda\in K^*$ produces a reparametrization of the same curve; only the location of the identity point changes.  For part (e), note that
$\T_{aq^i}(x)=\T_a(q^{-i} x) = (-x/a)^i \T_a(x)$. Suppose $(a,b,c,p_1,\ldots,p_9)$ and $(a',b',c',p'_1,\ldots,p'_9)$ satisfy the conditions in (e).
Then
\begin{eqnarray*}
& & (a'\cdot \T_{p'_1}\T_{p'_2}\T_{p'_3}:b'\cdot \T_{p'_4}\T_{p'_5}\T_{p'_6}:c'\cdot \T_{p'_7}\T_{p'_8}\T_{p'_9})\\
&=& \biggl(\frac{(-x)^na'\T_{p_1}\T_{p_2}\T_{p_3}}{p_1^{n_1}p_2^{n_2}p_3^{n_3}}:\frac{(-x)^nb'\T_{p_4}\T_{p_5}\T_{p_6}}{p_4^{n_4}p_5^{n_5}p_6^{n_6}}:\frac{(-x)^nc'\T_{p_7}\T_{p_8}\T_{p_9}}{p_7^{n_7}p_8^{n_8}p_9^{n_9}} \biggr) \\
&=&(a\cdot \T_{p_1}\T_{p_2}\T_{p_3}:b\cdot \T_{p_4}\T_{p_5}\T_{p_6}:c\cdot \T_{p_7}\T_{p_8}\T_{p_9}),
\end{eqnarray*}
where $n = n_1+n_2+n_3=n_4+n_5+n_6=n_7+n_8+n_9$. 

Finally, for (d), one may check the identity $x \T(x^{-1})=\T(x)$ directly from the definition
of the fundamental theta function.  In light of (\ref{eq:transpara}), this implies
$$ \!  \begin{matrix}
& & (a\cdot \T_{p_1}\T_{p_2}\T_{p_3}:b\cdot \T_{p_4}\T_{p_5}\T_{p_6}:c\cdot \T_{p_7}\T_{p_8}\T_{p_9})\\
&=& \!\! (\frac{-x^3 a}{p_1p_2p_3}\T(\frac{p_1}{x})\T(\frac{p_2}{x})\T(\frac{p_3}{x}):
\frac{-x^3 b}{p_4p_5p_6}\T(\frac{p_4}{x})\T(\frac{p_5}{x})\T(\frac{p_6}{x}):\frac{-x^3 c}{p_7p_8p_9}\T(\frac{p_7}{x})\T(\frac{p_8}{x})\T(\frac{p_9}{x}))\\
&=&(a \, \T_{\! \frac{1}{p_1}}(\! \frac{1}{x})\T_{\! \frac{1}{p_2}}(\frac{1}{x})\T_{\! \frac{1}{p_3}}(\frac{1}{x}):b \,\T_{\! \frac{1}{p_4}}(\frac{1}{x})\T_{\! \frac{1}{p_5}}(\frac{1}{x})\T_{\! \frac{1}{p_6}}(\frac{1}{x}):
c\, \T_{\! \frac{1}{p_7}}(\frac{1}{x})\T_{\! \frac{1}{p_8}}(\frac{1}{x})\T_{\!\frac{1}{p_0}}(\frac{1}{x})).
\end{matrix}
$$
This is the reparametrization of the elliptic curve $E$ under the involution $x\mapsto 1/x$
in the group law.  We have thus proved the if direction of Proposition \ref{prop:whensame}.

For the only-if direction, we write $\psi$ and $\psi'$ for the maps
$E \rightarrow  C \subset \PP^2$ defined by
$(a,b,\ldots,p_9)$ and $(a',b',\ldots,p_9')$ respectively.
Then $\,(\psi')^{-1} \circ \psi\,$ is an automorphism of the elliptic curve $E$.
The $j$-invariant of $E$ is neither of the special values $0$ or $1728$.
By \cite[Theorem 10.1]{silverbaby}, the only automorphisms of $E$
are the involution $x \mapsto 1/x$ and multiplication by some fixed element in the
group law. These are precisely the operations we discussed above,
and they can be realized by the transformations
from  $(a,b,\ldots,p_9)$ to $(a',b',\ldots,p_9')$  that are
described in (c) and (d).

Finally, if $(\psi')^{-1} \circ \psi$ is the identity on  $E$,
then plugging in $x=p_i$ for $i=1,2,3$ shows that $p_i$
is a zero of $a \T_{p_1} \T_{p_2} \T_{p_3}$ and hence 
of $a' \T_{p_1'} \T_{p_2'} \T_{p_3'}$. The same holds for
$i=4,5,6$ and $i=7,8,9$. This accounts for the
operations (a), (b) and (e).
\end{proof}

Our main result in this section is the following characterization of
honeycomb curves, in terms of the analytic representation of
plane cubics  in (\ref{eq:param}).
Writing $\mathbb{S}^1$ for the  circle, let
 $\mathcal{V}:K^*\rightarrow \mathbb{S}^1$ denote the composition 
$\,K^*\stackrel{\rm val}{\rightarrow}\R\rightarrow\R/{\rm val}(q) \simeq \mathbb{S}^1$.

\begin{theorem}\label{thm:analytic2}
Let $a,b,c,p_1,\ldots,p_9\in K^*$  as in Lemma \ref{thm:analytic1}. Suppose the values 
$\mathcal{V}(p_1),\ldots,\mathcal{V}(p_9)$ occur in cyclic order on $\mathbb{S}^1$, with
$\mathcal{V}(p_3)=\mathcal{V}(p_4), \mathcal{V}(p_6) = \mathcal{V}(p_7)$,
$ \mathcal{V}(p_9)=\mathcal{V}(p_1)$, and all other values  $\mathcal{V}(p_i)$ are distinct. Then the
image of the map  (\ref{eq:param}) is an elliptic curve in honeycomb form.
Conversely, any elliptic curve in honeycomb form arises in this manner, after a suitable
permutation of the indices.
\end{theorem}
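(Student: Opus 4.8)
The plan is to tropicalize the parametrization (\ref{eq:param}) directly and read off the combinatorial type of the resulting tropical cubic. The basic input is that, for $y\in K^*$ away from the zero locus $q^{\Z}$ of the fundamental theta function, $\val(\T(y))$ equals the value $\theta(\val(y))$ of one fixed concave piecewise-linear function $\theta\colon\R\to\R$ extracted from the product formula for $\T$: its corners lie exactly over $\val(q)\Z$, the slope drops by $1$ at each corner, and when $y$ approaches a zero $q^k$ one adds the correction $\val(y-q^k)$. Consequently each of the three coordinates in (\ref{eq:param}) has valuation a concave piecewise-linear function of $\val(x)$ with corners precisely at the points $\val(p_i)$, and as $x\to p_i$ exactly the one coordinate whose theta-product contains the factor $\T_{p_i}$ acquires unboundedly large valuation.

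First I would extract the cycle of ${\rm Trop}(C)$. Because $p_1p_2p_3=p_4p_5p_6=p_7p_8p_9$, the map $K^*\to\PP^2$ descends to $E$, and the induced map on valuations descends to a closed piecewise-linear loop $\mathbb{S}^1\to\R^2$; in the affine chart $(X,Y)=(\val z_0-\val z_2,\ \val z_1-\val z_2)$ its corners occur exactly at $\mathcal{V}(p_1),\dots,\mathcal{V}(p_9)$, and at a corner coming from an index in $\{1,2,3\}$, $\{4,5,6\}$, or $\{7,8,9\}$ the edge direction changes by $(-1,0)$, $(0,-1)$, or $(1,1)$ respectively, coinciding corners adding their increments. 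Under the hypothesis the six corners produce, in cyclic order, the increments $(0,1),(-1,0),(-1,-1),(0,-1),(1,0),(1,1)$, which one checks force the six successive edge directions of the loop to be, in cyclic order, $(1,1),(0,1),(-1,0),(-1,-1),(0,-1),(1,0)$ --- the standard honeycomb hexagon. Since $C$ has genus one, ${\rm Trop}(C)$ has a unique cycle, so this loop is it. Next I would locate the nine rays: near $x=p_i$ one coordinate valuation blows up, so in the chart $(X,Y)$ a ray emanates from the cycle point above $\mathcal{V}(p_i)$ in direction $(1,0)$, $(0,1)$, or $(-1,-1)$ according to the triple containing $i$. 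At the three ``single'' corners $\mathcal{V}(p_2),\mathcal{V}(p_5),\mathcal{V}(p_8)$ the emanating ray fits a trivalent vertex lying on the hexagon; at each of the three ``double'' corners $\mathcal{V}(p_3)=\mathcal{V}(p_4)$, $\mathcal{V}(p_6)=\mathcal{V}(p_7)$, $\mathcal{V}(p_9)=\mathcal{V}(p_1)$, two rays must be accommodated.

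The hard part is this last point: one must show that each double corner resolves into the honeycomb picture of a hexagon vertex joined by a bounded edge of \emph{strictly positive} length to a trivalent vertex carrying the two rays, rather than into a four-valent vertex --- which would be dual to a non-triangulated unit parallelogram in the Newton subdivision, so that $C$ would fail to be in honeycomb form. The naive tropicalization through $\val(x)$ alone cannot see this leg. To pin it down I would use the implicitization developed earlier in this section: the ten coefficients of the implicit cubic $F$ of $C$ are, by the classical theta addition relations, explicit products of the values $\T(p_i/p_j)$ with monomials in $a,b,c$, and substituting $\val(\T(p_i/p_j))=\theta(\mathcal{V}(p_i)-\mathcal{V}(p_j))$ one verifies that all nine ratios in (\ref{eq:sixratios})--(\ref{eq:threeratios}) have strictly positive valuation. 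The three ratios in (\ref{eq:threeratios}) are exactly the three leg lengths; strict positivity there uses that $\theta(\mathcal{V}(p_i)-\mathcal{V}(p_j))<0$ whenever $\mathcal{V}(p_i)\neq\mathcal{V}(p_j)$, together with the cancellation forced by the coincidence pattern. By the standard Newton-subdivision criterion this shows $C$ is in honeycomb form.

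For the converse, a cubic in honeycomb form is nonsingular and meets each coordinate line in three distinct points (a repeated intersection would stop the corresponding edge of the Newton triangle from being cut into three unit segments), so by Lemma \ref{thm:analytic1} it is the image of (\ref{eq:param}) for suitable $a,b,c,p_1,\dots,p_9$ with Tate parameter $q$. Re-running the analysis above, the combinatorial type of ${\rm Trop}(C)$ being the standard honeycomb forces exactly three coincidences among the $\mathcal{V}(p_i)$, with the six distinct values appearing in the cyclic order of the hexagon's vertices and the coincidences landing on the three trivalent-with-leg vertices. Finally, using operation (a) of Proposition \ref{prop:whensame} to permute indices within each triple, together with the $S_3$-symmetry of the honeycomb condition under permuting the coordinates of $\PP^2$ (which permutes the three triples), one relabels so that the coincidences are $\mathcal{V}(p_3)=\mathcal{V}(p_4)$, $\mathcal{V}(p_6)=\mathcal{V}(p_7)$, $\mathcal{V}(p_9)=\mathcal{V}(p_1)$ and the cyclic order is $\mathcal{V}(p_1),\dots,\mathcal{V}(p_9)$, as asserted.
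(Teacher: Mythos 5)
Your forward direction follows the same general route as the paper's first proof (tropicalize the theta-parametrization (\ref{eq:param}) and read off the hexagon and the rays), and you correctly isolate the hard point: showing each double corner carries a bounded leg of \emph{strictly positive} length rather than a four-valent vertex. But your proposed resolution of that point does not work as written. The substitution $\val(\T(p_i/p_j))=\theta(\mathcal{V}(p_i)-\mathcal{V}(p_j))$ fails exactly in the cases that matter: when $\mathcal{V}(p_i)=\mathcal{V}(p_j)$ (the pairs $\{p_3,p_4\},\{p_6,p_7\},\{p_9,p_1\}$) the valuation of the theta value exceeds its tropicalization by the positive gap $\delta(p_i,p_j)$ of (\ref{eq:deltadef})--(\ref{eq:thetagap}), and it is precisely these gaps that produce the positive lengths of the three legs; with the naive substitution the ratios in (\ref{eq:threeratios}) come out with valuation $0$, i.e.\ you would be proving the degenerate picture you are trying to exclude. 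Moreover, the implicit coefficients are not ``explicit products of the values $\T(p_i/p_j)$'': the implicitization in the paper expresses $c_{300}/c_{030}$, etc.\ as \emph{elementary symmetric functions} (sums) of such ratios, and $c_{111}$ only by evaluation at an auxiliary point, so pinning down their valuations requires ruling out ultrametric cancellation --- and the phrase ``the cancellation forced by the coincidence pattern'' is exactly the step that is never proved. The paper's first proof sidesteps the implicit equation entirely: it works with the parametrization, uses the gap formula (\ref{eq:thetagap}) together with the fact (\ref{eq:deltatriple}) that among $\delta(p_1,p_9),\delta(x,p_1),\delta(x,p_9)$ the minimum is attained twice, and reads off directly that the fiber over a double corner maps to a segment of length $\delta(p_1,p_9)$ plus two balanced rays.

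The converse is a genuine gap as well. You dismiss it with ``re-running the analysis above \dots forces exactly three coincidences,'' but the forward computation only shows that the stated valuation pattern yields a honeycomb; for the converse you would have to carry out the tropicalization, with all its $\delta$-corrections, for an \emph{arbitrary} coincidence pattern and cyclic arrangement of $\mathcal{V}(p_1),\ldots,\mathcal{V}(p_9)$ (including patterns with larger fibers, where trees rather than segment-plus-two-rays get attached) and show that none of them produces the honeycomb combinatorics. You have not done this, and the authors explicitly flag it as the hard direction: their first proof defers the converse, remarking that it seems challenging without \cite{bpr}, and their second proof obtains it from the nonarchimedean Poincar\'e--Lelong formula and the faithfulness statement \cite[Corollary 6.27(1)]{bpr}, which force the nine-punctured skeleton to look like Figure \ref{fig:zwei}, i.e.\ force the punctures to retract to six points of the cycle in the asserted pattern. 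Your final relabelling step via Proposition \ref{prop:whensame}(a) and the coordinate permutations is fine, but it only applies after that structural statement has been established.
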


We shall present two alternative proofs of Theorem~\ref{thm:analytic2}.
These will highlight different features of honeycomb curves
and how they relate to the literature.
The first proof is computational and
relates our study to   the {\em tropical theta functions} studied by
 Mikhalkin and Zharkov \cite{mz}.
The second proof is more conceptual. It is based
on the nonarchimedean Poincar\'e-Lelong formula for {\em Berkovich curves} \cite[Theorem 5.68]{bpr}.
Both approaches were suggested to us by Matt~Baker.

\begin{proof}[First proof of Theorem \ref{thm:analytic2}]
We shall examine the naive tropicalization of the elliptic curve 
$E= K^*/q^{\mathbb{Z}}$ under its embedding (\ref{eq:param}) into  $\PP^2$.
Set $Q  = {\rm val}(q) \in \mathbb{R}_{>0}$.
If $a \in K^*$ with $A = {\rm val}(a) \in \mathbb{R}$ then
the {\em tropicalization} of the theta function  $\Theta_a : K^* \rightarrow K $ is
obtained by replacing the infinite product of binomials 
in the definition of $\Theta(x)$ by an infinite sum
of pairwise minima. The result is the function
\begin{equation}
\label{eq:troptheta1}
\, {\rm trop}(\Theta_a) : \,\mathbb{R} \,\rightarrow \,\mathbb{R} \,, \,\,\,
 X \,\mapsto \,
\sum_{n > 0} {\rm min}(0,nQ {+}X{-}A) \, + \,
\sum_{n \geq 0} {\rm min}(0,nQ{+}A{-}X).
\end{equation}
For any particular real number $X$,
only finitely many summands are non-zero,
and hence ${\rm trop}(\Theta_a)(X)$ is a 
well-defined real number. A direct calculation shows that
\begin{equation}
\label{eq:troptheta2}
{\rm trop}(\Theta_a)(X) \,\, = \,\,\,
{\rm min} \biggl\{\, \frac{m^2 - m}{2} \cdot Q + m \cdot (A-X) \,\,:\,\,
m \in \mathbb{Z} \, \biggr\}.
\end{equation}
Indeed, the distributive law transforms the
 tropical product of binomials on the right hand side of (\ref{eq:troptheta1})
into  the tropical sum in (\ref{eq:troptheta2}).
The representation (\ref{eq:troptheta2}) is essentially the same
as the {\em tropical theta function} of Mikhalkin and Zharkov~\cite{mz}.

The tropical theta function is a piecewise linear function on
$\mathbb{R}$, and we can translate (\ref{eq:troptheta2}) into 
an explicit  description of the linear pieces of its graph. We find
\begin{equation}
\label{eq:troptheta3}
{\rm trop}(\Theta_a)(X) \,\, = \,\,\,
 \frac{m^2 - m}{2} \cdot Q + m \cdot (A-X) \,\,
\end{equation} 
where $m$ is the unique integer satisfying $\,mQ \leq A-X < (m+1)Q$.
In particular, for arguments $X$ in this interval, the
function is linear with slope $-m$.

The tropical theta function approximates the
valuation of the theta function. These two functions agree unless 
there is some cancellation because the two terms in some binomial factor
of $\Theta_a$ have the same order.

The gap between the tropical theta function
and the valuation of the theta function 
is crucial in understanding the tropical geometry of the map
$\mathcal{V} : K^* \rightarrow \mathbb{S}^1$. Our next definition makes this precise. If $x,y \in K^*$
with $\mathcal{V}(x) = \mathcal{V}(y) $ then we~set
\begin{equation}
\label{eq:deltadef}
\delta(x,y) \,\,:=\,\, {\rm val}\bigl( 1 - \frac{x}{y} q^i \bigr) 
\end{equation}
where $i \in \mathbb{Z}$ is the unique integer satisfying
${\rm val}(x) + {\rm val}(q^i) = {\rm val}(y)$.
It is easy to check that the quantity defined in 
(\ref{eq:deltadef}) is symmetric, i.e.
$\,\delta(x,y) \,\, = \,\, \delta(y,x) $.
With this notation, the following formula
characterizes the gap  between the tropical theta function
and the valuation of the theta function.
For any $a,x \in K^*$, we have
\begin{equation}
\label{eq:thetagap}
{\rm val} \bigl(\Theta_a(x) \bigr) -
{\rm trop}(\Theta_a)({\rm val}(x))  \,\,\, = \,\,\,
\begin{cases}
\,\delta(x,a) & {\rm if} \, \mathcal{V}(a) = \mathcal{V}(x), \\
\quad \,\,0 & \hbox{otherwise.}
\end{cases} 
\end{equation}
 
Consider now any three scalars $x,y,z \in K^*$ that lie in the same
fiber of the map from $K^*$ onto the unit circle
$\mathbb{S}^1$. In symbols, $\mathcal{V}(x) = \mathcal{V}(y) = \mathcal{V}(z)$.
Then 
\begin{equation}
\label{eq:deltatriple}
 \hbox{the minimum of $\delta(x,y)$, $ \delta(x,y) $
and $\delta(y,z)$ is attained twice.} 
\end{equation}
This follows from the identity
$$ (xq^i - yq^j) + (yq^j-z) + (z - x q^i) \,\,= \,\, 0 $$
where  $i,j \in \mathbb{Z}$ are defined by
${\rm val}(x) + {\rm val}(q^i) = {\rm val}(y) + {\rm val}(q^j)= {\rm val}(z) $.

We are now prepared to prove Theorem \ref{thm:analytic2}.
Set $Q = {\rm val}(q)$.
For $i=1,2,\ldots,9$, let $P_i = {\rm val}(p_i)$
and write $P_i = n_i Q + r_i$ where $n_i \in \mathbb{Z}$ and $r_i \in [0,Q)$.
Rescaling the $p_i$'s by a common factor and inverting them all does not
change the cubic curve, by Proposition  \ref{prop:whensame}.
After performing such operations if needed, we can assume
\begin{equation}
\label{eq:rrrQ}
 0 = r_9 = r_1 < r_2 < r_3 = r_4 < r_5 < r_6 = r_7 < r_8 < Q. 
 \end{equation}
The hypothesis (\ref{eq:transpara}) implies
$$ r_1 + r_2 + r_3 \,\,\equiv \,\,
r_4 + r_5 + r_6 \,\, \equiv \,\,
r_7 + r_8 + r_9 \qquad ({\rm mod} \,\,Q). $$
Together with the chain of inequalities in (\ref{eq:rrrQ}), this implies
\begin{equation}
\label{eq:niceshift}
\begin{matrix}  r_1 + r_2 + r_3 & = &
r_4 + r_5 + r_6 - Q & = &
r_7 + r_8 + r_9 -Q ,\\
 n_1 + n_2 + n_3 & = &
n_4 + n_5 + n_6 +1 & = &
n_7 + n_8 + n_9 +1 .
\end{matrix} 
\end{equation}

We now examine the naive tropicalization
$\mathbb{R} \rightarrow \mathbb{TP}^2$ of our
map (\ref{eq:param}). It equals
\begin{equation}
\label{eq:tropparam}
 X \,\,\mapsto \,\,
\begin{matrix} \bigl(\,
A + 
{\rm trop}(\Theta_{p_1})(X) + 
{\rm trop}(\Theta_{p_2})(X) + 
{\rm trop}(\Theta_{p_3})(X) : \\ \,\,\,
B + 
{\rm trop}(\Theta_{p_4})(X) + 
{\rm trop}(\Theta_{p_5})(X) + 
{\rm trop}(\Theta_{p_6})(X) : \\ \,\,\,\,
C + 
{\rm trop}(\Theta_{p_7})(X) + 
{\rm trop}(\Theta_{p_8})(X) + 
{\rm trop}(\Theta_{p_9})(X) \,\,\bigr). 
\end{matrix}
\end{equation}
Here $A = {\rm val}(a), B = {\rm val}(b), C = {\rm val}(c)$.
The image of this piecewise-linear map coincides with the
tropicalization of the image of (\ref{eq:param}) for 
generic values of $X = {\rm val}(x)$. Indeed, if $X$ lies
in the open interval $(0,r_2)$ then, by  (\ref{eq:troptheta3}),
the map (\ref{eq:tropparam}) is linear
and its image in the tropical projective plane $\mathbb{TP}^2$ is a segment with slope
$$ (-n_1-n_2-n_3-1 : -n_4-n_5-n_6 : n_7-n_8-n_9-1)
\,\,=\,\, (\,1\, : \,1\, : \,0\,). $$
Here we fix tropical affine coordinates with last coordinate $0$. Similarly,
\begin{itemize}
\item the slope is $(0 : 1 : 0)$ for $X \in (r_2,r_3)$,
\item the slope is $(-1 : 0 : 0)$ for $X \in (r_4,r_5)$,
\item the slope is $( -1 : -1 : 0 )$ for $X \in (r_5,r_6)$,
\item the slope is $( 0 : -1 : 0 )$ for $X \in (r_7,r_8)$,
\item the slope is $(1 : 0 : 0)$ for $X \in (r_8,Q)$.
\end{itemize}
These six line segments form a hexagon in  $\mathbb{T}\mathbb{P}^2$.
The vertices of that hexagon are the images 
of the six distinct real numbers $r_i$ in (\ref{eq:rrrQ}) under the map (\ref{eq:tropparam}).

Finally, we  examine the special values $x \in K^*$
for which the naive tropicalization (\ref{eq:tropparam})
does not compute the correct image in $\mathbb{T}\mathbb{P}^2$.
This happens precisely when some of the nine theta functions in
(\ref{eq:param}) have a valuation gap when passing to
(\ref{eq:tropparam}).
 
Suppose, for instance, that $\mathcal{V}(x) = \mathcal{V}(p_1) = \mathcal{V}(p_9)$.
From (\ref{eq:thetagap}) we~have
\begin{equation}
\label{eq:thetagap2}
{\rm val} \bigl( \T_{p_i} (x) \bigr) -
{\rm trop}( \T_{p_i} )( {\rm val} (x) )  \,\,\, = \,\,\,
\begin{cases}
\,\delta(x,p_i) & {\rm if} \,\, i = 1 \,\,{\rm or} \,\, i=9, \\
\qquad 0 & {\rm if} \,\, i \not= 1,9.
\end{cases} 
\end{equation}
We know from (\ref{eq:deltatriple}) that the minimum 
of $\delta(p_1,p_9),\delta(x,p_1), \delta(x,p_9)$ is achieved twice.
Moreover, by varying the choice of the scalar $x$ with $\mathcal{V}(x) = r_1$, the latter two quantities can 
attain any non-negative value that is compatible with this constraint.
This shows that the image of the set of
such $x$ under the tropicalization of the map (\ref{eq:param})
consists of one bounded segment and two rays in $\mathbb{T}\mathbb{P}^2$.
The segment meets the hexagon described above
at the vertex corresponding to $r_1 = r_9$, and consists of the images of
the points $x$ such that $\delta(p_1, p_9)\ge \delta(p_1, x)=\delta(p_9,x)$.
Since $\Theta_{p_1}$ and $\Theta_{p_9}$ occur in the first and third
coordinates, respectively, of (\ref{eq:param}), the slope of the  segment is
$(1:0:1)$, and its length is $\delta(p_1,p_9)$.  Similarly, the
image of the points $x$ such that $\delta(p_1,x)\ge
\delta(p_1,p_9)=\delta(p_9,x)$
is a ray of slope $(1:0:0)$, and the image of the points $x$ such that
$\delta(p_9,x)\ge \delta(p_1,p_9)=\delta(p_1,x)$
is a ray of slope $(0:0:1)$.
Note that these three slopes obey  the balancing condition for tropical curves.

A similar analysis determines all six connected components of
the complement of the hexagon in the tropical curve, and we
see that the tropical curve is a honeycomb when
the asserted conditions on $a,b,c,p_1,p_2,\ldots,p_9$ are satisfied.

The derivation of the converse direction, that any
honeycomb cubic has the desired parametrization,
will be deferred to the second proof.
It seems challenging to  prove this without \cite{bpr}.
See also the problem stated at the end of this section.
\end{proof}

\begin{figure}
\includegraphics[scale=0.7]{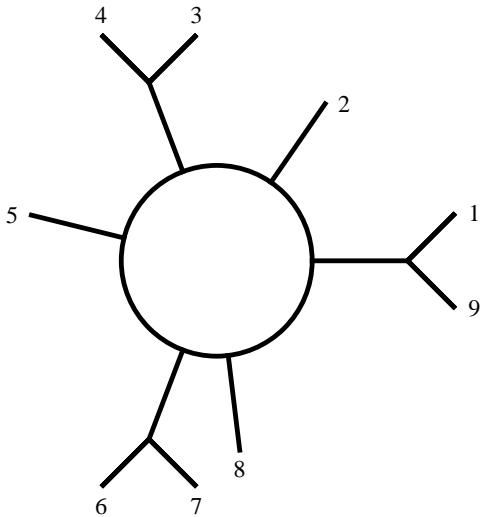}
\vskip -0.1in
\caption{The Berkovich skeleton $\Sigma$ of an elliptic curve with honeycomb punctures}
\label{fig:zwei}
\end{figure}

\begin{proof}[Second proof of Theorem \ref{thm:analytic2}]
We work in the setting of Berkovich curves introduced by Baker, Payne and Rabinoff in \cite{bpr}.
Let $E^{an}$ denote the {\em analytification} of the elliptic curve $E$,
and let $\Sigma$ denote the minimal skeleton of $E^{an}$,
as defined in \cite[\S 5.14]{bpr},  with respect to the given
set $\,D = \{p_1,p_2,\ldots,p_9 \}$ of nine punctures.
Our standing assumption ${\rm val}(j) < 0$ ensures that the Berkovich curve
$E^{an}$ contains a unique cycle $\mathbb{S}^1$, and 
$\Sigma$ is obtained from that cycle by attaching trees with nine leaves in total.
In close analogy to \cite[\S 7.1]{bpr}, we consider the retraction map
onto~$\mathbb{S}^1$:
$$ E(K)\backslash D \, \hookrightarrow \,E^{an}\backslash D \,\rightarrow \,\Sigma \,\rightarrow\,
 \mathbb{R}/{\val(q)\mathbb{Z}} \,\simeq \,\mathbb{S}^1. $$
 The condition in Theorem \ref{thm:analytic2} states that,
under this map, the points of $E(K)$ given by $p_1,p_2,\ldots,p_9$ retract to six
distinct points on $\mathbb{S}^1$, in cyclic order with
fibers $\{p_9,p_1\}, \{p_2\},\{p_3,p_4\}, \{p_5\},\{p_6,p_7\},\{p_8\}$.
This means that $\Sigma$ looks precisely like the graph in Figure \ref{fig:zwei}.
This picture is the Berkovich model of a honeycomb cubic.
To see this, we shall apply the nonarchimedean Poincar\'e-Lelong Formula
\cite[Theorem 5.69]{bpr} in conjunction with a combinatorial argument
about balanced graphs in $\mathbb{R}^2$.

The rational functions $f$ and $g$ in (\ref{eq:f_and_g}) are
well-defined on $E^{an} \backslash D$, and
we may consider the negated logarithms of their norms:
$F = -{\rm log} |f|$ and $G = -{\rm log}|g|$.
Our tropical curve can be identified with its image 
in $\R^2$ under the map $(F,G)$.
According to part (1) of \cite[Theorem 5.69]{bpr}, this map
factors through the retraction of $E^{an} \backslash D$ onto $\Sigma$. By part (2), 
the function $(F,G)$ is  linear on each edge of $\Sigma$.

We shall argue that the graph $\Sigma$ in
Figure \ref{fig:zwei} is mapped isometrically onto a  tropical honeycomb curve in $\R^2$.
Using part (5) of \cite[Theorem 5.69]{bpr}, we can determine the slopes
of the nine unbounded edges. Namely, $F$ has slope $1$ on the rays in $\Sigma$ towards $p_1, p_2$ and $p_3$, and slope $-1$ on the rays towards $p_7, p_8 $ and $p_9$. 
Similarly, $G$ has slope $1$ on the rays in $\Sigma$ towards $p_4, p_5$ and $p_6$, and slope $-1$ on the rays towards $p_7, p_8$ and $p_9$.  
By part (4), the functions $F$ and $G$ are harmonic, which means that the image
in $\R^2$ satisfies the balancing condition of tropical geometry.
This requirement uniquely determines the slopes of the nine bounded edges
in the image of $\Sigma$. For the three edges not on the cycle this is immediate,
and for the six edges on the cycle, this follows by solving a linear system of equations.
The unique solution to these constraints is a balanced planar graph
that must be a honeycomb cubic. Conversely, every
tropical honeycomb cubic 
in $\mathbb{R}^2$ is trivalent with all multiplicities one.
By \cite[Corollary 6.27(1)]{bpr}, the skeleton
$\Sigma$ must look like Figure \ref{fig:zwei},
and the corresponding map $(F,G)$ is an isometry onto the cubic.
\end{proof}    

In the rest of Section 3, we discuss
computational aspects of the representation     
of plane cubics given in Lemma  \ref{thm:analytic1} 
and Theorem \ref{thm:analytic2}. We begin
with the {\em implicitization problem}: Given $a,b,c,p_1,\ldots,p_9 \in K^*$,
how can we compute the implicit equation (\ref{eq:cubic})?
Write  $\bigl(f(x): g(x):h(x) \bigr)$ for the analytic 
parametrization  in (\ref{eq:param}). Then we seek to compute
the unique (up to scaling) coefficients $c_{ijk}$ in 
a $K$-linear relation
\begin{equation}
\label{eq:ansatz}
\begin{matrix}
 c_{300}f(x)^3+c_{210}f(x)^2 g(x) +c_{120}f(x) g(x)^2 \qquad \qquad \qquad \qquad  \\
 +\, c_{030}g(x)^3+c_{021}g(x)^2h(x) + c_{012}g(x) h(x)^2 
   +c_{003}h(x)^3 \qquad \quad \, \\
   +\,c_{102}f(x)h(x)^2+c_{201}f(x)^2 h(x)+c_{111}f(x)g(x)h(x) \qquad = \quad 0 .
\end{matrix}
\end{equation}
Evaluating this relation at $x=p_i$, and noting that
$\T_{p_i}(p_i) = 0$, we get nine linear equations for the nine $c_{ijk}$'s
other than $c_{111}$. These equations are
$$ 
\begin{matrix}
c_{300}f(p_i)^3+c_{210}f(p_i)^2 g(p_i) +c_{120}f(p_i) g(p_i)^2  + c_{030}g(p_i)^3 \,\,=\,\,0
 &{\rm for} \,\,\, i=7,8,9  , \\
c_{003}h(p_i)^3 + c_{102}f(p_i)h(p_i)^2+c_{201}f(p_i)^2 h(p_i)+   c_{300}f(p_i)^3
\,\,=\,\,0  &    {\rm for} \,\,\, i=4,5,6 ,\\
 c_{030}g(p_i)^3+c_{021}g(p_i)^2h(p_i) + c_{012}g(p_i) h(p_i)^2 
   +c_{003}h(p_i)^3 \,\,=\,\,0
    & {\rm for} \,\,\, i=1,2,3 .
  \end{matrix}   $$
   The first group of equations has a solution
   $(c_{300}, c_{210}, c_{120}, c_{030})$ 
   that is unique up to scaling. Namely,
   the ratios $c_{300}/c_{030}, c_{210}/c_{030}, c_{120}/c_{030}$
   are the elementary symmetry functions in the three quantities
   $$
   \frac{b \cdot   \T(\frac{p_7}{p_4}) \T(\frac{p_7}{p_5}) \T(\frac{p_7}{p_6})}
{a   \cdot \T(\frac{p_7}{p_1}) \T(\frac{p_7}{p_2}) \T(\frac{p_7}{p_3})}\, ,\,\,\,
  \frac{b \cdot   \T(\frac{p_8}{p_4}) \T(\frac{p_8}{p_5}) \T(\frac{p_8}{p_6})}
{a   \cdot \T(\frac{p_8}{p_1}) \T(\frac{p_8}{p_2}) \T(\frac{p_8}{p_3})} \quad \hbox{and} \quad
  \frac{b \cdot   \T(\frac{p_9}{p_4}) \T(\frac{p_9}{p_5}) \T(\frac{p_9}{p_6})}
{a   \cdot \T(\frac{p_9}{p_1}) \T(\frac{p_9}{p_2}) \T(\frac{p_9}{p_3})} .
$$
   The analogous statements hold for the second and third group of equations.

We are thus left with computing the
middle coefficient $c_{111}$ in the relation
(\ref{eq:ansatz}). We do this by
picking any $v \in K$ with
$f(v) g(v) h(v) \not= 0$.
Then (\ref{eq:ansatz}) gives
$$ c_{111} \,\,= \,\, - \frac{1}{f(v) g(v) h(v)} \bigl(
 c_{300}f(v)^3+c_{210}f(v)^2 g(v) + \cdots +c_{201}f(v)^2 h(v)\bigr). $$
We have implemented this implicitization method in \textsc{Mathematica},
for input data in the field $K = \mathbb{Q}(t)$ of rational functions
with rational coefficients.

The {\em parameterization problem} is harder. 
Here we are given the $10$ coefficients $c_{ijk}$
of a honeycomb cubic that has three distinct intersection
points with each coordinate line $z_0=0, z_1=0, z_2=0$.
The  task is to compute $12$ scalars $a,b,c,p_1,\ldots,p_9 \in K^*$
that represent the cubic as in Lemma \ref{thm:analytic1}.
The $12$ output scalars are not unique, but the degree of
non-uniqueness is characterized exactly by
Proposition \ref{prop:whensame}.
This task amounts to solving an analytic system of equations.
We shall leave it to a future project 
to design an algorithm for doing this in practice.

\section{The Tropical Group Law}

In this section, we present a combinatorial description of
the group law on a honeycomb elliptic curve based on the
parametric representation  in Section~3.  
We start by studying the inflection points of such a curve.
We continue to assume that
$K$ is  algebraically closed and complete with respect to a nonarchimedean valuation.  

Let $\,E \buildrel{\psi}\over{\longrightarrow} C \subset \PP^2\,$
be a honeycomb embedding of  the abstract elliptic curve $E= K^*/q^{\mathbb{Z}}$.
Let $v_1,v_2,v_3,v_4,v_5$ and $v_6$ denote the vertices of the hexagon in the tropical cubic ${\rm trop}(C)$, labeled as in Figure \ref{fig:law1}.
Let $e_i$ denote the edge between
$v_i$ and $v_{i+1}$, with the convention $v_7 = v_1$,
and let $\ell_i$ denote the lattice length of $e_i$.
  By examining the width and height of the hexagon, we see that the six lattice lengths
 $\ell_1,\ell_2,\ell_3,\ell_4,\ell_5,\ell_6$
  satisfy two linearly independent  relations:
  $$ \ell_1 + \ell_2 \, = \ell_4 + \ell_5 \quad \hbox{and} \quad
      \ell_2+\ell_3 \, = \ell_5 + \ell_6 . 
      $$
We first prove the following basic fact about 
the inflection points on the cubic~$C$.

\begin{figure}
\includegraphics[scale=0.7]{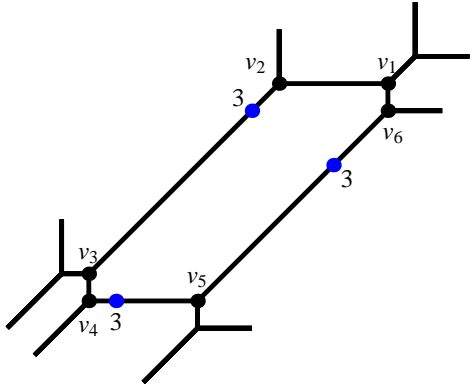}
\vskip -0.1in
\caption{A honeycomb cubic and its nine inflection points in groups of three}
\label{fig:law1}
\end{figure}

\begin{lemma}\label{l:inflect}
The tropicalizations of the nine inflection points of
the cubic curve $C \subset \mathbb{P}^2$ retract to the 
hexagonal cycle 
of ${\rm trop}(C) \subset \mathbb{R}^2$ in three groups of~three.
\end{lemma}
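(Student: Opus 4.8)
The plan is to work entirely in the analytic model $E = K^*/q^{\mathbb Z}$ using the parametrization of Lemma~\ref{thm:analytic1} and the tropical computations in the first proof of Theorem~\ref{thm:analytic2}. First I would recall that, by Proposition~\ref{prop:whensame}, we may normalize the nine parameters $p_i$ so that the valuations satisfy the chain of equalities and inequalities in \eqref{eq:rrrQ}, i.e. $0 = r_9 = r_1 < r_2 < r_3 = r_4 < r_5 < r_6 = r_7 < r_8 < Q$, where $P_i = {\rm val}(p_i) = n_iQ + r_i$. The key classical input is the description of inflection points via the Hesse pencil: a point $P \in C$ is an inflection point if and only if $3P$ (in the group law, with respect to an inflection chosen as origin) is trivial, equivalently $P$ is a $3$-torsion point once we translate the origin to one of them. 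Translating to the analytic picture: if $u_0 \in K^*$ is (a lift of) one inflection point, then the nine inflection points are exactly the classes of $u_0 \zeta$ where $\zeta$ ranges over the cube roots of $1$ in $K^*/q^{\mathbb Z}$, i.e. over the group generated by a primitive cube root of unity $\xi \in K$ and by $q^{1/3}$ (a fixed cube root of $q$ in $K$). So the nine inflection points have $K^*$-representatives $u_0\,\xi^{s}\,q^{t/3}$ for $s,t \in \{0,1,2\}$.

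The main computation is then to determine the valuations of these nine representatives modulo $Q = {\rm val}(q)$ — equivalently, to compute $\mathcal V$ of each — since Theorem~5.69 of \cite{bpr} (as used in the second proof of Theorem~\ref{thm:analytic2}) guarantees that the retraction of $\mathrm{trop}(C)$ onto the hexagonal cycle is exactly the map $\mathcal V \colon K^* \to \mathbb S^1 = \mathbb R/Q\mathbb Z$ restricted to $E(K)$. Since $\xi$ is a unit, $\mathrm{val}(u_0 \xi^s q^{t/3}) = \mathrm{val}(u_0) + tQ/3$. Hence the nine inflection points fall into exactly three fibers of $\mathcal V$, according to $t \in \{0,1,2\}$: the fiber over $\mathcal V(u_0)$, the fiber over $\mathcal V(u_0) + Q/3$, and the fiber over $\mathcal V(u_0) + 2Q/3$, each containing the three points with that value of $t$ and varying $s$. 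These three images are distinct points of $\mathbb S^1$ (they differ by $Q/3$ and $2Q/3$), and they lie on the hexagonal cycle because every point of $E(K)$ retracts onto it. This is precisely the assertion "in three groups of three."

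I expect the main obstacle to be rigorously justifying the passage between the algebraic notion of inflection point and the torsion description in the analytic uniformization — specifically, checking that under Tate's isomorphism $E \cong K^*/q^{\mathbb Z}$ the $3$-torsion subgroup is generated by $\xi$ and $q^{1/3}$, and that an inflection point is a translate of the origin by a $3$-torsion element regardless of which inflection is chosen as the group identity. This is standard (it follows from the fact that inflection points of a plane cubic form a torsor under $E[3]$, and from the explicit description of torsion on a Tate curve in \cite[Ch.~V]{silverman}), so I would cite it rather than reprove it. A secondary, more delicate point: one should confirm that the valuations $\mathrm{val}(u_0) + tQ/3$ are not accidentally equal modulo $Q$ to each other — but they cannot be, since $Q/3 \notin Q\mathbb Z$. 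A third, minor point worth a sentence: the three fiber-values need not coincide with any of the six vertex values $r_i$ of the hexagon, so the three groups of inflection points generically retract to interior points of three of the six edges; this matches Figure~\ref{fig:law1} and does not affect the statement. With these observations in place, the lemma follows immediately, and in fact the argument pins down the three retraction points exactly, which is what will be needed for the tropical group law in the rest of Section~4.
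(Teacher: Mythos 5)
Your proposal is correct and follows essentially the same route as the paper: identify the retraction onto the hexagonal cycle with the map induced by the valuation $K^*/q^{\mathbb{Z}}\to\mathbb{R}/{\rm val}(q)\mathbb{Z}$ (via \cite{bpr}), describe the nine inflection points as translates of one of them by the $3$-torsion $\langle \xi, q^{1/3}\rangle$ of the Tate curve, and observe that their valuations take exactly three values differing by ${\rm val}(q)/3$, giving three groups of three. The only cosmetic difference is that the paper first applies a multiplicative translation so the identity maps to an inflection point, whereas you carry the lift $u_0$ along; this changes nothing of substance.
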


\begin{proof}
This lemma is best understood from the perspective of Berkovich theory.
The analytification $E^{\rm an}$ retracts onto its skeleton, namely the unique
cycle, which is isometrically embedded into ${\rm trop}(C)$ as the hexagon.
Thus every point of $E$ retracts onto a unique point in the hexagon.  In fact,
this retraction is given by
\begin{equation}
\label{eq:retract}
K^*/q^{\mathbb{Z}} \,\cong \,E(K)\,\hookrightarrow \, E^{\rm an}
\, \twoheadrightarrow \ \mathbb{S}^1\,\cong \,\R/{\rm val}(q) \Z
\end{equation}
and is the natural map induced from the valuation homomorphism
$\,(K^*, \,\cdot\,) \rightarrow (\R,+)$.  We refer the reader unfamiliar with the map (\ref{eq:retract}) to \cite[Theorem 7.2]{bpr}.

Now, after a multiplicative translation, we may assume that $\psi$ sends the identity of $E$ to an inflection point.  Then the inflection points of $C$ are the images of the 3-torsion points of $E = K^*/q^{\mathbb{Z}}$.   These can be written as $\omega^i \cdot q^{j/3}$ for $\omega^3=1$, for $q^{1/3}$ a cube root of $q$, and for $i,j=0,1,2$.  Note that ${\rm val}(\omega) = 0$ whereas ${\rm val}(q^{1/3}) = {\rm val}(q)/3>0$.
 Hence the valuations of the scalars 
  $\omega^i \cdot q^{j/3}$ are $0$, ${\rm val}(q)/3$, and
  $2 {\rm val}(q)/3$, and each group contains
three of these nine scalars.
\end{proof}

Our next result refines Lemma~\ref{l:inflect}. It is a very special case of a theorem due to
Brugall\'e and de Medrano~\cite{BruMed} which covers
honeycomb curves of arbitrary degree.

\begin{lemma}
Let $P \in {\rm trop}(C)$ be the tropicalization of an inflection point on the 
cubic curve $C \subset \mathbb{P}^2$. Then there are three possibilities,
as indicated in Figure \ref{fig:law1}:
\begin{itemize}
\item  The point $P$ lies on the longer of $e_1$ or $e_2$, at distance
$|\ell_2-\ell_1| /3$ from $v_2$.
\item  The point $P$ lies on the longer of $e_3$ or $e_4$, at distance
$|\ell_4-\ell_3| /3$ from $v_4$.
\item  The point $P$ lies on the longer of $e_5$ or $e_6$, at distance
$|\ell_6-\ell_5| /3$ from $v_6$.
\end{itemize}
The nine inflection points fall into three groups of three in this way.
\end{lemma}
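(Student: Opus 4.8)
The plan is to build on the parametric description from the first proof of Theorem~\ref{thm:analytic2} and the retraction picture from Lemma~\ref{l:inflect}. By Lemma~\ref{l:inflect} (and after the multiplicative translation used there) the nine inflection points of $C$ are the images of the $3$-torsion points $\omega^i q^{j/3}$ of $E = K^*/q^{\mathbb{Z}}$, and these retract to the hexagonal cycle in three groups, according to whether $\mathcal{V}$ of the point equals $0$, $\mathrm{val}(q)/3$, or $2\,\mathrm{val}(q)/3$. So the first step is to locate these three arcs on the hexagon in terms of the $r_i$ from (\ref{eq:rrrQ}): normalizing as in the first proof of Theorem~\ref{thm:analytic2}, the six vertices $v_1,\dots,v_6$ of the hexagon are the images under the tropical parametrization (\ref{eq:tropparam}) of the six values $r_1=r_9$, $r_2$, $r_3=r_4$, $r_5$, $r_6=r_7$, $r_8$ on $\mathbb{S}^1 = \R/Q\Z$. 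The inflection points in a given group all share the same $\mathcal{V}$-value, hence retract to a single point of $\mathbb{S}^1$, which lies on exactly one of the six edges; symmetry of the three $3$-torsion cosets under translation by $q^{1/3}$ forces these three $\mathcal{V}$-values to be spaced $Q/3$ apart, so they land on three edges that are ``every other pair'' — consistent with the claim that one group sits on $e_1\cup e_2$, one on $e_3\cup e_4$, one on $e_5\cup e_6$.

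The second step is to pin down the exact position within the relevant pair of edges. Here I would exploit the group law directly on $E = K^*/q^{\mathbb{Z}}$ together with the balancing/slope data already computed in the first proof of Theorem~\ref{thm:analytic2}: on the hexagon, consecutive edges $e_i$ and $e_{i+1}$ meet at $v_{i+1}$, and the unique point of $\mathbb{S}^1$ a given group retracts to can be computed by solving, inside the fundamental domain $[0,Q)$, the congruence that says $3$ times that point is the $\mathcal{V}$-value of the identity (an inflection point after translation). Since the map $\mathbb{S}^1 \to \mathrm{trop}(C)$ is an isometry on each edge (length $\ell_i$) and the vertex $v_2$ corresponds to $r_2 = r_2$ while $v_1,v_3$ correspond to the endpoints $r_1, r_3$ of the arc $e_1\cup e_2$, a short computation with the relations $r_1+r_2+r_3 = r_4+r_5+r_6-Q = r_7+r_8+r_9-Q$ from (\ref{eq:niceshift}) shows that the inflection $\mathcal{V}$-value in that group sits at signed distance $(\ell_2-\ell_1)/3$ from $v_2$; taking the longer of $e_1,e_2$ absorbs the sign and gives the stated $|\ell_2-\ell_1|/3$. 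The other two groups are handled identically by the cyclic symmetry of the setup, giving $|\ell_4-\ell_3|/3$ from $v_4$ and $|\ell_6-\ell_5|/3$ from $v_6$.

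The main obstacle I anticipate is bookkeeping rather than conceptual: correctly translating the ``$\mathcal{V}$-value'' of a $3$-torsion point into a point on a specific edge of the hexagon, keeping the identification of $\mathbb{S}^1$ with $\R/Q\Z$ and the labeling of edges $e_i$ versus vertices $v_i$ consistent with Figure~\ref{fig:law1}, and making sure the translation used in Lemma~\ref{l:inflect} (which moves the identity to an inflection point and hence shifts all nine $\mathcal{V}$-values by a common amount) is accounted for when comparing to the vertices $v_i$. One has to verify that this common shift does not move the inflection points off the predicted edges — equivalently, that the identity of $C$, placed at an inflection point, itself retracts to one of the three predicted arcs — which again follows because the three $3$-torsion $\mathcal{V}$-values are a coset of the order-$3$ subgroup $\{0, Q/3, 2Q/3\}$ of $\mathbb{S}^1$ and the three arcs $e_1\cup e_2$, $e_3\cup e_4$, $e_5\cup e_6$ are exactly the three regions the hexagon is cut into by that subgroup. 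Alternatively — and this is probably the cleanest write-up — one can simply cite the Brugall\'e--de~Medrano theorem \cite{BruMed} for honeycomb curves of arbitrary degree and observe that the degree-three case specializes to the three bullet points; but I would still include the short self-contained argument above since all the needed ingredients are already in hand.
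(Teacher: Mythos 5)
Your strategy is sound and it genuinely differs from the paper's. The paper never returns to the theta coordinates: it takes a classical line $L$ generically lifting the tropical line with node at $v_2$, notes that $L\cap C$ tropicalizes to the stable intersection $\{v_1,v_2,v_3\}$, translates the identity to the inflection point in question so that collinear triples multiply to the identity, and reads off the single congruence $(\ell_1+x)+x+(x-\ell_2)\equiv 0 \pmod{{\rm val}(q)}$, i.e.\ $3x\equiv \ell_2-\ell_1$, for the distance $x$ from $v_2$; the three locations then come from repeating this at $v_4$ and $v_6$. You instead stay inside the parametrization of Theorem~\ref{thm:analytic2}: inflection points are exactly the $u$ with $3[u]$ a line section, i.e.\ $u^3\equiv p_1p_2p_3 \pmod{q^{\Z}}$ by (\ref{eq:transpara}), so $3\mathcal{V}(u)\equiv r_1+r_2+r_3\equiv r_4+r_5+r_6\equiv r_7+r_8+r_9 \pmod{{\rm val}(q)}$, and by (\ref{eq:niceshift}) the three solutions are the averages $(r_1+r_2+r_3)/3$, $(r_4+r_5+r_6)/3$, $(r_7+r_8+r_9)/3$, each lying strictly between the two flanking two-ray vertices at signed distance (right length minus left length)$/3$ from the middle vertex. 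I checked this computation and it does yield exactly the three bullet points, so your route works; the paper's argument is parametrization-free and is the one that generalizes (Brugall\'e--de Medrano), while yours makes the ``three groups of three'' and their exact positions fall out of (\ref{eq:rrrQ})--(\ref{eq:niceshift}) in one stroke.

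Two caveats on your write-up. First, the load-bearing step is only asserted (``a short computation \dots shows''), and as phrased it mixes normalizations: under (\ref{eq:rrrQ}) the identity of the parametrization retracts to the vertex $r_1=r_9=0$, not to an inflection point, so the congruence ``$3\times$(point) $=\mathcal{V}(\mathrm{id})$'' is not the one to use in those coordinates; the clean statement is $3\mathcal{V}(u)\equiv r_1+r_2+r_3$ coming from $u^3\equiv p_1p_2p_3$ in $K^*/q^{\Z}$, with no translation at all. Second, your claim that $e_1\cup e_2$, $e_3\cup e_4$, $e_5\cup e_6$ are ``exactly the three regions the hexagon is cut into by the subgroup $\{0,{\rm val}(q)/3,2{\rm val}(q)/3\}$'' is false in general: those arcs have lengths $\ell_1+\ell_2$, $\ell_3+\ell_4$, $\ell_5+\ell_6$, which need not each equal ${\rm val}(q)/3$. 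Fortunately this auxiliary claim is not needed: once you have the signed distance $(\ell_2-\ell_1)/3$ from $v_2$, the bound $|\ell_2-\ell_1|\le\max(\ell_1,\ell_2)$ already places the point on the longer adjacent edge (and a point retracting to the interior of an edge equals its retraction, since tentacles attach only at vertices). You should also record the degenerate case $\ell_1=\ell_2$, where the retraction is the vertex itself and $P$ may sit anywhere on the ray at $v_2$, as the paper notes after the statement.
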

In the special case that $\ell_1= \ell_2$ (and similarly 
$\ell_3= \ell_4$ or $\ell_5=\ell_6$), the lemma should be understood as saying that $P$ lies somewhere on the ray emanating from~$v_2$.

\begin{proof}
Consider the tropical line
whose node lies at $v_2$, and let $L$ be any classical line in $\PP^2_K$
that is generic among lifts of that tropical line. Then the three points
in $L \cap C$ tropicalize to the stable intersection points of
${\rm trop}(L)$ with ${\rm trop}(C)$, namely the vertices
$v_1,v_2$ and $v_3$.
Let $x$ denote the counterclockwise distance along the hexagon from $v_2$ to $P$.
By applying a multiplicative translation in $E$, 
we fix the identity to be an inflection point on $C$
that tropicalizes to $P$. Then
 $v_1 + v_2 + v_3 = 0$ in the group $\mathbb{S}^1 \simeq \R/{\rm val}(q) \Z$.
This observation implies the congruence relation
$$ (\ell_1 + x) + x + (x - \ell_2) \,\, \equiv \,\, 0 \quad {\rm mod}\,\, \,{\rm val}(q), $$
and hence $\,3x \equiv \ell_2 - \ell_1 $.
One solution to this congruence is $x = (\ell_2 - \ell_1)/3$.
This means that the point $P$ lies on the longer edge, $e_1$ or $e_2$,
at distance $|\ell_1-\ell_2| /3$ from $v_2$. The analysis is identical for 
the vertex $v_4$ and for the vertex $v_6$, and it identifies two other
locations on $\mathbb{S}^1$ for retractions of inflection points.
\end{proof}

Next, we wish to review some basic facts about the
group structures on a plane cubic curve $C$.
The abstract elliptic curve $E= K^*/q^{\mathbb{Z}}$
is an abelian group, in the obvious sense, as a quotient
of the abelian group $(K^*, \,\cdot \,)$. Knowledge of that group structure
is equivalent to knowing the surface $\{ (r,s,t) \in E^3 : r \cdot s \cdot t 
= q^{\mathbb{Z}} \}$.

The group structure on $E$ induces a group structure  $(C, \,\star \,)$
on the plane cubic curve $C = \psi(E)$.
While the isomorphism $\psi$ is analytic, 
the group operation on $C$ is actually algebraic. Equivalently,
the set
$\{(u,v,w) \in C^3 : u \star v \star w = {\rm id} \}$ is an
algebraic surface in $(\mathbb{P}^2)^3$.
However, this surface depends heavily 
on the choice of parametrization $\psi$.
Different isomorphisms from 
the abstract elliptic curve $E$ onto the plane cubic $C$ will result
in different group structures in $\mathbb{P}^2$.

The most convenient  choices of $\psi$
are those that send the identity element $  q^{\mathbb{Z}} $ of $E$
to one of the nine inflection points of $C$. 
Such maps $\psi$ are characterized by the condition that 
$\,u \star v \star w = {\rm id}  \,$ if and only if
$u,v$ and $w$ are collinear in $\mathbb{P}^2$.
If so, the data of the group law can be recorded in the surface
\begin{equation}
\label{eq:gplaw1}
\bigl\{(u,v,w) \in C^3 \,: \,\hbox{$u$, $v$ and $w$ lie on a line in $\mathbb{P}^2$} \,\bigr\}
\quad \subset \quad (\mathbb{P}^2)^3.
\end{equation}

We emphasize, however, that
we have no reason to assume a priori that the identity
element $\,{\rm id} = \psi( q^{\mathbb{Z}} )\,$ on
the plane cubic $C$ is an inflection point:
{\em every point on $C$ is the identity in some group law}.
Indeed, we can simply replace $\psi$ 
by its composition with 
a translation
$\,x \mapsto r \cdot x \,$ of the group $E$.
Our analysis below covers all cases:
our combinatorial description
of the group law on ${\rm trop}(C)$ is always valid,
regardless of whether the identity is an 
inflection point or~not.

A partial description of the tropical group law
was given by  Vigeland in \cite{vige}.
Specifically, his choice of the fixed point
$\,\mathcal{O}\,$ corresponds to the point
$\,{\rm trop}(\psi( q^{\mathbb{Z}} ))$.
Vigeland's group law is a combinatorial extension of the
polyhedral surface
\begin{equation}
\label{eq:tgplaw}
\bigl\{ (U,V,W) \in \mathbb{S}^1 \times \mathbb{S}^1 \times \mathbb{S}^1\,:\,
U+V+W = \mathcal{O} \bigr\} 
\end{equation}
This torus comes with a distinguished subdivision into polygons
since $\mathbb{S}^1$ has been identified with the hexagon.
The polyhedral torus (\ref{eq:tgplaw}) is illustrated in Figure \ref{fig:torus}.

\begin{figure}
\centering
\subfloat{\includegraphics[width=0.4\textwidth]{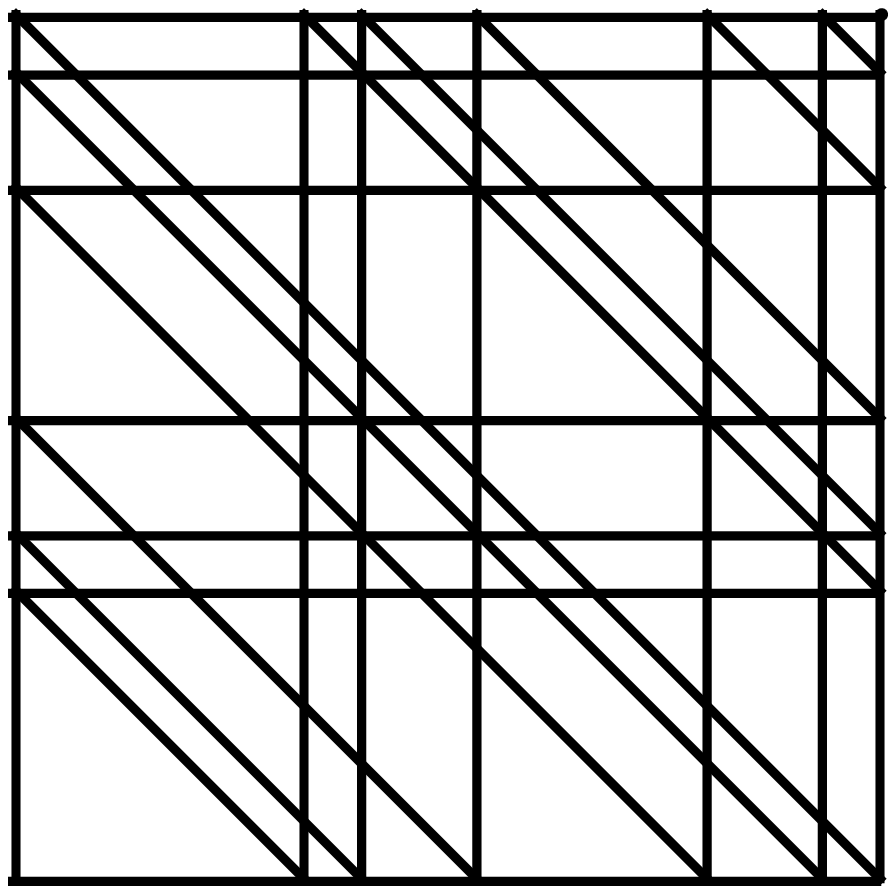}} 
\qquad\qquad
\subfloat{\includegraphics[width=0.4\textwidth]{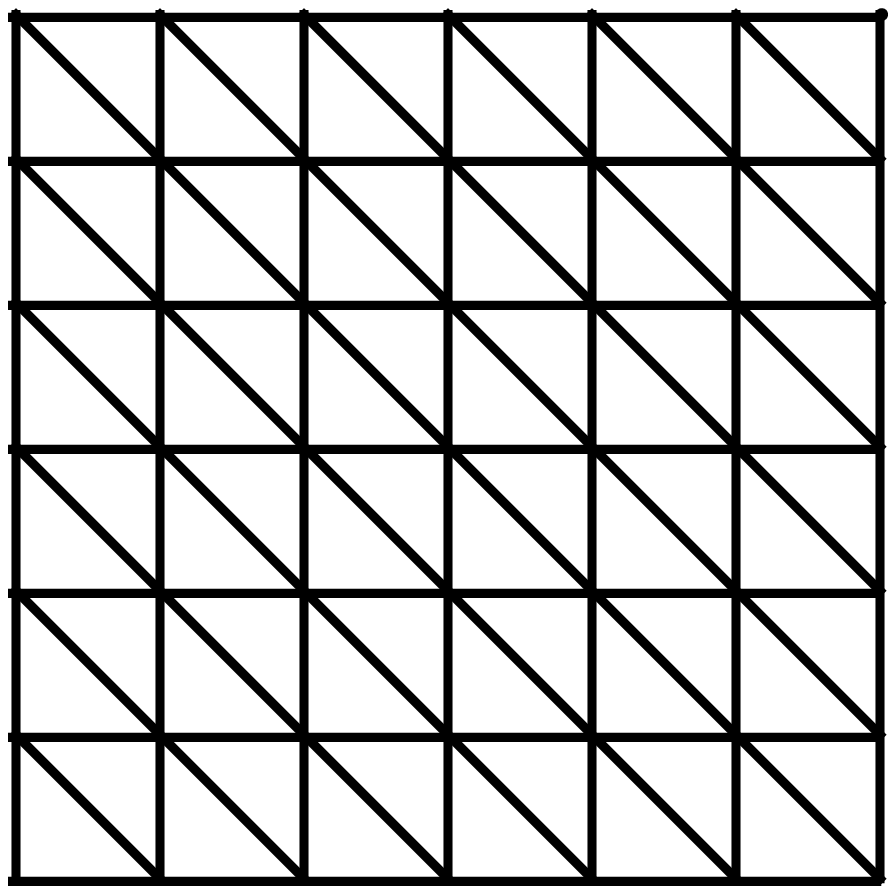}} 
\vskip -0.1in
\caption{The torus in the tropical group law surface
(\ref{eq:tgplaw}). The two pictures represent
 the honeycomb cubic and the symmetric honeycomb cubic shown in Figure~\ref{fig:eins}.}
\label{fig:torus}
\end{figure}
\smallskip

Our goal is to characterize the tropical group law as follows.
 For a given honeycomb embedding $\psi:E\rightarrow  \mathbb{P}^2$, we define the {\em tropical group law surface} to be
$${\rm TGL}(\psi) = \bigl\{({\rm trop}(\psi(x)),{\rm trop}(\psi(y)),{\rm trop}(\psi(z))): x,y,z\in E, x\cdot y\cdot z = {\rm id}
\bigr\}\subset (\mathbb{R}^2)^3.$$
If $\psi$ sends the identity to an inflection point, 
then this is the tropicalization of (\ref{eq:gplaw1}):
$$ {\rm TGL}(\psi) \,=\,
\{({\rm trop}(u),{\rm trop}(v),{\rm trop}(w)): \hbox{$u,v,w\in C$ lie on a line in $\mathbb{P}^2$}\}\subset (\mathbb{R}^2)^3.$$
The tropical group law surface is a tropical algebraic variety of dimension $2$, 
and can in principle be computed, for $K = \mathbb{Q}(t)$, using the software $\texttt{gfan}$ \cite{gfan}.
This surface contains all the information of the {\em tropical group law}.  
We shall explain how ${\rm TGL}(\psi)$ can be computed combinatorially,
even if $\psi({\rm id}) $ is not an inflection point.

Our approach  follows directly from Section 3.  Let $\psi: E \rightarrow \mathbb{P}^2$ be a honeycomb embedding of an elliptic curve $E$, and let
 $V:K^*\rightarrow \mathbb{S}^1$ denote the composition 
$\,K^*\stackrel{\rm val}{\rightarrow}\R\rightarrow\R/{\rm val}(q) \simeq \mathbb{S}^1$.
By Theorem~\ref{thm:analytic2}, there exist $a,b,c,p_1,\ldots,p_9\in K^*$ with 
$\mathcal{V}(p_1),\ldots,\mathcal{V}(p_9)$ occurring in cyclic order on $\mathbb{S}^1$, with
$\mathcal{V}(p_3) = \mathcal{V}(p_4), \mathcal{V}(p_6) = \mathcal{V}(p_7), \mathcal{V}(p_9)=\mathcal{V}(p_1)$, and all other values  $\mathcal{V}(p_i)$ distinct, such that
$\psi$ is given~by
\begin{equation*}
x \mapsto
\bigl( a\cdot \T_{p_1}\T_{p_2}\T_{p_3}(x)
:b\cdot \T_{p_4}\T_{p_5}\T_{p_6}(x)
:c\cdot \T_{p_7}\T_{p_8}\T_{p_9}(x)  \bigr).
\end{equation*}

Now, again as in Section 3, given $x,y \in K^*$
with $\mathcal{V}(x) = \mathcal{V}(y) $, we~set
\begin{equation*}
\delta(x,y) \,\,=\,\, {\rm val}\bigl( 1 - \frac{x}{y} q^i \bigr), 
\end{equation*}
where $i \in \mathbb{Z}$ is specified by
${\rm val}(x) + {\rm val}(q^i) = {\rm val}(y)$.
We have seen that if $x\in K^*$ satisfies $\mathcal{V}(x) = \mathcal{V}(p_i)$, 
then ${\rm trop}(x)$ lies at distance $\delta(x,p_i)$ from the hexagon 
along the tentacle associated to $p_i$.  
By the {\em tentacle} of $p_i$ we mean the union of the ray associated to $p_i$ 
and the bounded segment to which that ray is attached.

The next proposition is our main result in Section 4.
We shall construct the tropical group law surface by way
of its projection to the first two coordinates
$$\pi: {\rm TGL}(\psi) \rightarrow {\rm trop}(C)\times{\rm trop}(C) \,\subset \, (\mathbb{R}^2)^2 .$$ 
As before, we identify   the hexagon of ${\rm trop}(C) $
with the circle $\,\mathbb{S}^1 \simeq \R/{\rm val}(q) $.
 Given $U,V\in {\rm trop}(C)$,  let $U\circ V\in \mathbb{S}^1$ denote the sum of the retractions of 
 $U$ and $V$ to the hexagon.
  The location of $U\circ V$ depends on the choice of $\psi$ and in particular the location of 
  $\,\mathcal{O} = {\rm trop}(\psi({\rm id}))$ on 
  the hexagon $\mathbb{S}^1$.
     Let $-(U\circ V)$ denote the inverse of $U\circ V$, again under addition on 
     $\mathbb{S}^1$.  By the {\em distance} of a point $U\in {\rm trop}(C)$ to 
     the hexagon $\mathbb{S}^1$ we mean the lattice length of the unique path in ${\rm trop}(C)$ from $U$ to
      $\mathbb{S}^1$. 
      Finally, we say that $u\in K^*$ is a lift of $U \in {\rm trop}(C)$ if ${\rm trop}(\psi(u \cdot q^\mathbb{Z}))=U$.  
The following proposition characterizes the fiber of the map $\pi$ over a given pair $(U,V)$.

\begin{proposition}\label{p:fibers}
Let $\psi: E \rightarrow C\subset \mathbb{P}^2$ be a honeycomb embedding, with
  the operation $\circ:{\rm trop}(C)\times {\rm trop}(C)\rightarrow \mathbb{S}^1$ defined as above,
  and ``is a vertex'' refers to the hexagon $\mathbb{S}^1$.    For any 
  $\,U$ and $V\in {\rm trop}(C)$, exactly one of the following occurs:
\begin{enumerate}
	\item If $-(U\circ V)$ is not a vertex, then $\pi^{-1}(U,V)$ is the singleton $\{-(U\circ V)\}.$
	\item If $-(U\circ V)$ is a vertex adjacent to a single unbounded ray $R_i$
	 towards the point $p_i$,  then $\pi^{-1}(U,V)$ is the set of points on $R_i$ whose distance to $\mathbb{S}^1$ equals $\delta(u^{-1}v^{-1},p_i)$ for some lifts $u,v\in K^*$ of $\,U$ and $V$. 
	\item If $-(U\circ V)$ is a vertex adjacent to a bounded segment $B$, along with two rays $R_j$ and $R_k$,
	toward the points $p_j$ and $p_k$, then $\pi^{-1}(U,V)$ consists of
		\begin{itemize}
	\item the points on $B$ whose distance to $\mathbb{S}^1$ is equal to $\delta(u^{-1}v^{-1},p_j)\,=\delta(u^{-1}v^{-1},p_k)\,$ for some lifts $u,v\in K^*$ of $\,U$ and $V$,
	\item the points on $R_j$ whose distance to $\mathbb{S}^1$ is equal to 
	$\delta(u^{-1}v^{-1},p_j) >  \delta(u^{-1}v^{-1},p_k)\,$ for some lifts $u,v\in K^*$ of $\,U$ and $V$, and
	\item the points on $R_k$ whose distance to $\mathbb{S}^1$ is equal to 
	$\delta(u^{-1}v^{-1},p_k)  > \delta(u^{-1}v^{-1},p_j)\,$ for some lifts $u,v\in K^*$ of $\, U$ and $V$.
		\end{itemize}
\end{enumerate}
\end{proposition}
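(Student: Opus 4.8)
The plan is to reduce the description of the fiber $\pi^{-1}(U,V)$ to locating a single point of ${\rm trop}(C)$, and then to pin that point down with the theta-function machinery assembled in the first proof of Theorem~\ref{thm:analytic2}. I would open with the tautological observation that $\pi^{-1}(U,V)$ equals the set of all points ${\rm trop}(\psi(u^{-1}v^{-1}))$ as $u$ ranges over lifts of $U$ and $v$ over lifts of $V$. For the inclusion ``$\subseteq$'': if $(U,V,W)\in{\rm TGL}(\psi)$, choose $x,y,z\in E$ with $x\cdot y\cdot z={\rm id}$ tropicalizing to $(U,V,W)$ together with representatives $\tilde x,\tilde y\in K^*$; the group relation forces $z=(\tilde x\tilde y)^{-1}q^{\mathbb{Z}}$, and since ${\rm val}$, $\delta$ and ${\rm trop}\circ\psi$ are all invariant under multiplication by $q^{\mathbb{Z}}$, we get $W={\rm trop}(\psi(\tilde x^{-1}\tilde y^{-1}))$ with $\tilde x,\tilde y$ lifts of $U,V$. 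The inclusion ``$\supseteq$'' is immediate, as $(uq^{\mathbb{Z}},vq^{\mathbb{Z}},(uv)^{-1}q^{\mathbb{Z}})$ always lies in ${\rm TGL}(\psi)$. This reformulation already guarantees that the fiber \emph{equals} (and not merely contains) each of the sets described in the statement, so it remains only to determine, for an arbitrary pair of lifts, where the point $W={\rm trop}(\psi(z))$ with $z=u^{-1}v^{-1}$ sits on ${\rm trop}(C)$.

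Next I would record that any lift $u$ of $U$ has ${\rm val}(u)$ equal to the retraction of $U$ to the hexagon, by the retraction map~(\ref{eq:retract}), and likewise for $v$; hence $\mathcal V(z)=-\mathcal V(u)-\mathcal V(v)=-(U\circ V)$, so $W$ always retracts to the point $-(U\circ V)$ of $\mathbb{S}^1$. Since a honeycomb cubic is trivalent, each of the six hexagon vertices either carries a single unbounded ray or carries a bounded segment that branches into two rays, so the trichotomy in the statement --- according as $-(U\circ V)$ is not a vertex, is a vertex of the first kind, or is a vertex of the second kind --- is exhaustive and mutually exclusive. If $-(U\circ V)$ is not a vertex, then $\mathcal V(z)\ne\mathcal V(p_i)$ for every $i$, so by the gap formula~(\ref{eq:thetagap}) none of the nine theta functions appearing in~(\ref{eq:param}) has a valuation gap at $z$; therefore ${\rm trop}(\psi(z))$ agrees with the naive tropicalization~(\ref{eq:tropparam}) evaluated at $\mathcal V(z)$, which is exactly the point $-(U\circ V)$ of the hexagon, independently of the choice of lifts. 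This is part~(1).

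Now suppose $-(U\circ V)=\mathcal V(p_i)$. If the retraction fiber over this vertex is the singleton $\{p_i\}$, then by~(\ref{eq:thetagap}) only $\Theta_{p_i}$ has a gap at $z$, of size $\delta(z,p_i)$, so --- as recalled just before the proposition --- $W$ lies on the tentacle of $p_i$, namely the single ray $R_i$, at distance $\delta(u^{-1}v^{-1},p_i)$ from $\mathbb{S}^1$; varying the lifts yields part~(2). If instead the fiber is $\{p_j,p_k\}$, then exactly $\Theta_{p_j}$ and $\Theta_{p_k}$ have gaps, of sizes $\delta(z,p_j)$ and $\delta(z,p_k)$, while the bounded segment $B$ of this tentacle has length $\delta(p_j,p_k)$. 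Applying~(\ref{eq:deltatriple}) to the triple $z,p_j,p_k$, the minimum of $\delta(z,p_j)$, $\delta(z,p_k)$, $\delta(p_j,p_k)$ is attained twice, which leaves exactly three possibilities: $\delta(z,p_j)=\delta(z,p_k)$, in which case $W$ lies on $B$ at distance $\delta(z,p_j)$; or $\delta(z,p_j)>\delta(z,p_k)$, which forces $\delta(z,p_k)=\delta(p_j,p_k)$ and places $W$ past the branch point on $R_j$ at distance $\delta(z,p_j)>\delta(p_j,p_k)$; or the mirror image, with $W$ on $R_k$. That $W$ sits on $R_j$ rather than $R_k$ in the second case is read off from the ray slopes computed in the first proof of Theorem~\ref{thm:analytic2}, where the ray toward $p_j$ is characterized as the locus of points $x$ with $\delta(x,p_j)\ge\delta(x,p_k)$. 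Combining the three possibilities and varying the lifts yields part~(3).

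I expect the main obstacle to be the bookkeeping in part~(3): one must simultaneously track the total valuation gap --- which fixes the distance of $W$ from $\mathbb{S}^1$ along a tentacle --- and the \emph{branch direction} beyond the branch point, and check that the inequalities among $\delta(z,p_j)$, $\delta(z,p_k)$, $\delta(p_j,p_k)$ forced by~(\ref{eq:deltatriple}) line up precisely with the segment length and the ray slopes recorded in the proof of Theorem~\ref{thm:analytic2}. Everything else reduces to the valuation-gap formula~(\ref{eq:thetagap}) together with the opening tautology.
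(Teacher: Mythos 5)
Your proposal is correct and follows essentially the same route as the paper's proof: identify the fiber with the points ${\rm trop}(\psi(u^{-1}v^{-1}))$ over all lifts, observe that this point retracts to $-(U\circ V)$, and then locate it along the relevant tentacle via the valuation-gap formula (\ref{eq:thetagap}) and the distances $\delta(u^{-1}v^{-1},p_i)$ established in the first proof of Theorem~\ref{thm:analytic2}. Your write-up is merely more explicit than the paper's (the opening tautology about lifts and the three-way case analysis via (\ref{eq:deltatriple}) are left implicit there), which is fine.
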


\begin{proof} For any lifts $u,v\in K^*$ of $U,V$, we have 
$\, \mathcal{V}(u^{-1}v^{-1}) + \mathcal{V}(u) + \mathcal{V}(v) = 0 \,$ in $\,\mathbb{S}^1 = \R/{\rm val}(q) \Z$.
This equation determines the retraction $-(U\circ V)$  of 
the point ${\rm trop}(\psi(u^{-1}v^{-1}))$ to the hexagon.  
If $-(U\circ V)$ is not a vertex, then ${\rm trop}(\psi(u^{-1}v^{-1}))$ must be precisely the point 
$-(U\circ V)$, as no other points retract to it.  

If instead $-(U\circ V)$ is a vertex of the hexagon, then we either have $\mathcal{V}(u^{-1}v^{-1})=\mathcal{V}(p_i)$ for exactly one $p_i$, or $\mathcal{V}(u^{-1}v^{-1})=\mathcal{V}(p_j)=\mathcal{V}(p_k)$ for exactly two points $p_j$ and $p_k$, depending on whether one or two rays emanate from $-(U\circ V)$.  
We have seen (in our first proof of Theorem~\ref{thm:analytic2}) that the distance of ${\rm trop}(\psi(u^{-1}v^{-1}))$ to the hexagon, measured along the tentacle associated to $p_i$, is $\delta(u^{-1}v^{-1}, p_i)$.
In either case, these distances uniquely determine the location of $\,{\rm trop}(\psi(u^{-1}v^{-1}))$. 
\end{proof}

We demonstrate this method in a special example which was  found in discussion with Spencer Backman.
Pick $r,s\in K^*$ with $r^6=q\,$ and $\,{\rm val}(1-s)=:\beta>0$.  Let 
\begin{equation}
\label{eq:ps}
\begin{matrix}
p_1 = r^{-1}s^{-1},  &  p_2 = 1,  & p_3 = rs , \\
p_4 = rs^{-1},   & p_5 = r^2 , & p_6 = r^{-3}s, \\
p_7 = r^{3}s^{-1},  & p_8 = r^{-2},  & p_9 = r^{-1}s.
\end{matrix}
\end{equation}
 We also set $a=b=c=1$ in (\ref{eq:param}).
This choice produces a symmetric honeycomb embedding $\psi:E\rightarrow C\subset \mathbb{P}^2$.
The parameter $\beta$ is the common length of the three bounded segments
adjacent to $\mathbb{S}^1$.
Note that $p_1p_2p_3 = p_4p_5p_6 = p_7p_8p_9 = 1 $ in $K^*$.
This condition  implies that the identity of $E$ is mapped to an inflection point in $C$.

\begin{corollary} \label{cor:eleven}
For the elliptic curve in honeycomb form defined by
 (\ref{eq:ps}), the tropical group law ${\rm TGL}(\psi)$ is a polyhedral surface consisting of
117 vertices, 279 bounded edges, 315 rays, 54 squares, 108 triangles, 279 flaps, and 171 quadrants. \end{corollary}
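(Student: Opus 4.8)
The plan is to carry out explicitly the combinatorial construction of $\mathrm{TGL}(\psi)$ encoded in Proposition~\ref{p:fibers}, using the specific parametrization~(\ref{eq:ps}), and then count cells. First I would record the precise shape of the tropical curve $\mathrm{trop}(C)$ for the data~(\ref{eq:ps}): since $a=b=c=1$ and the $p_i$ are chosen symmetrically, $\mathrm{trop}(C)$ is the symmetric honeycomb whose hexagon $\mathbb{S}^1$ has all six sides of equal lattice length (determined by $\mathrm{val}(r)=Q/6$ where $Q=\mathrm{val}(q)$), with three bounded segments of common length $\beta$ hanging off the three alternate vertices $v_2,v_4,v_6$, each terminating in a vertex from which two rays emanate. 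So there are three ``long tentacles'' (bounded segment $+$ two rays, attached at $v_2,v_4,v_6$) and three ``short tentacles'' (a single ray, attached at $v_1,v_3,v_5$). I would also note that because $p_1p_2p_3=p_4p_5p_6=p_7p_8p_9=1$ in $K^*$, the identity maps to an inflection point, so $\mathcal{O}=\mathrm{trop}(\psi(\mathrm{id}))$ is one of the inflection-point retractions described in Lemma~\ref{l:inflect} and its refinement; this fixes the operation $\circ$ on $\mathbb{S}^1$ concretely.

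Next I would analyze the projection $\pi:\mathrm{TGL}(\psi)\to \mathrm{trop}(C)\times\mathrm{trop}(C)$ stratum by stratum, following the trichotomy of Proposition~\ref{p:fibers}. Partition $\mathrm{trop}(C)\times\mathrm{trop}(C)$ according to (i) which part of the curve $U$ lies in (interior of a hexagon edge, a hexagon vertex of short or long type, interior of a bounded segment, interior of a ray), and likewise for $V$; and (ii) the type of the point $-(U\circ V)\in\mathbb{S}^1$, which is determined by $U$ and $V$ only through their retractions to $\mathbb{S}^1$ and the fixed shift by $-\mathcal{O}$. Over the open stratum where $-(U\circ V)$ is not a vertex, $\pi$ is a bijection, so that part of $\mathrm{TGL}(\psi)$ is a copy of (a subset of) $\mathrm{trop}(C)\times\mathrm{trop}(C)$; over the loci where $-(U\circ V)$ hits a short-tentacle vertex the fiber is a ray, and over the loci where it hits a long-tentacle vertex the fiber is a ``Y'' (a bounded segment with its distance to $\mathbb{S}^1$ governed by $\delta(u^{-1}v^{-1},p_j)=\delta(u^{-1}v^{-1},p_k)$, together with the two rays where strict inequality holds). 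Here the key computational input is the evaluation of $\delta$ for the specific parameters: with $r^6=q$ and $\mathrm{val}(1-s)=\beta$, the quantities $\delta(u^{-1}v^{-1},p_i)$ become explicit piecewise-linear functions of the distances of $U,V$ to the hexagon and of their angular positions, and one reads off exactly the break structure. The terminology ``flap'' is for the two-dimensional cells of the form (bounded segment)$\times$(interval) arising over the long-tentacle loci, and ``quadrant'' for the unbounded two-dimensional cells.

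Having the stratification, the final step is bookkeeping: count vertices, bounded edges, rays, and each type of $2$-cell (square, triangle, flap, quadrant) in each stratum and sum. Because the configuration~(\ref{eq:ps}) has a $\mathbb{Z}/3$ symmetry cyclically permuting the three pairs of coordinate lines (equivalently rotating the hexagon by two vertices and permuting the three tentacle-pairs), I would compute the cell counts on one ``fundamental domain'' and multiply by~$3$, then correct for cells fixed or shared by the symmetry. A useful internal check is to verify the Euler-characteristic/balancing relations for the resulting polyhedral surface and to cross-check the raw counts against the $\texttt{gfan}$ computation over $K=\mathbb{Q}(t)$ mentioned earlier in Section~4, which provides an independent confirmation of the final tallies $117$ vertices, $279$ bounded edges, $315$ rays, $54$ squares, $108$ triangles, $279$ flaps, and $171$ quadrants.

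I expect the main obstacle to be the combinatorial bookkeeping in the third step rather than any single conceptual point: one must handle correctly the degenerate sub-loci where $U$ or $V$ themselves retract to a hexagon vertex \emph{and} simultaneously $-(U\circ V)$ is a vertex, since there the fiber description of Proposition~\ref{p:fibers} interacts with the already-singular fiber of the retraction, producing the lower-dimensional cells (vertices and edges of $\mathrm{TGL}(\psi)$) where several $2$-cells meet; getting the incidences among squares, triangles, and flaps right along these loci — and not double-counting shared edges under the $\mathbb{Z}/3$ symmetry — is where the care is needed. The $\delta$-computations themselves are routine given the earlier formulas, and the gross structure (which stratum gives squares versus triangles versus flaps versus quadrants) falls out of the shape of the symmetric honeycomb and the position of $\mathcal{O}$.
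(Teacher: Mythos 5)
Your proposal is correct and follows essentially the same route as the paper: the paper also builds ${\rm TGL}(\psi)$ by modifying $X\times X$ (with $X={\rm trop}(\psi(E))$), attaching over each $(U,V)$ the fiber $\pi^{-1}(U,V)$ described by Proposition~\ref{p:fibers}, computing the relevant $\delta$-values for the specific data (\ref{eq:ps}), and then carrying out a case-by-case count (e.g.\ each of the $36$ squares of the torus $\mathbb{S}^1\times\mathbb{S}^1$ gets split into two triangles) to obtain the stated $f$-vector. Your additional use of the cyclic symmetry and the Euler-characteristic/\texttt{gfan} cross-checks are sensible refinements of the same bookkeeping the paper leaves as "a detailed case analysis."
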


Here, a ``flap" is a product of a bounded edge and a ray, and a ``quadrant" is a product of two rays.
Note that the Euler characteristic is
$\,117-279+54+108 = 0$.
This is consistent with the fact that the
Berkovich skeleton of $(E \times E)^{an}$ is a torus.

\begin{proof}
Let $X = {\rm trop}(\psi(E)) $
and $\pi : {\rm TGL}(\psi) \rightarrow X \times X$ as
in  Proposition \ref{p:fibers}. We modify
the tropical surface $X\times X$ by attaching the fiber $\pi^{-1}(U,V)$ to each point $(U,V)\in X\times X$.  
These modifications change the polyhedral structure of $X\times X$.  For example, the
torus $\mathbb{S}^1 \times \mathbb{S}^1$ in $X\times X$ 
consists of $36$ squares,
but the modifications
 subdivide each square into two triangles as in Figure~\ref{fig:torus} on the~right.

We give three examples of explicit computations of fibers $\pi^{-1}(U,V)$ but omit the full analysis.    
For convenience, say that a point $U\in X$ {\em prefers} $p_i$ if it lies on the infinite ray associated to $p_i$.  For our first example, suppose $U$ prefers $p_3$ and $V$ prefers $p_6$, and suppose $u,v\in K^*$ are any lifts of $U$ and $V$.  We set
$ \rho = u / p_3 $ and
$\sigma = v / p_6$. These two scalars in $K^*$ satisfy
$\,u^{-1}v^{-1} = p_5 s^{-2} \rho^{-1} \sigma^{-1}\,$ and
\begin{equation}
{\rm val}(1-\rho) = \delta(p_3,u)  > \beta
\quad \hbox{and} \quad
{\rm val}(1-\sigma) = \delta(p_6,v) > \beta.
\label{eq:lengths}
\end{equation}
It is a general fact that
$\,{\rm val}(1-xy) = \min\{{\rm val}(1-x),{\rm val}(1-y)\}\,$ 
 if $x,y \in K^*$ have valuation $0$ and ${\rm val}(1-x)\ne{\rm val}(1-y)$.
 Combining this fact with (\ref{eq:lengths}), we find
    $$ \delta(u^{-1}v^{-1}, p_5) \,=\, {\rm val}(1-s^2\rho\sigma) \,=\,
    {\rm val}(1- s^2) \, =\,  {\rm val}(1-s) \,= \, \beta.$$
We conclude that ${\rm trop}(\psi(u^{-1}v^{-1}))$ prefers $p_5$ and is at lattice distance  $\beta$ from the hexagon.  Thus we do not modify $X\times X$ above  $(U,V)$ since the fiber is a single~point.

\begin{figure}
\centering
\includegraphics[width=2.8in]{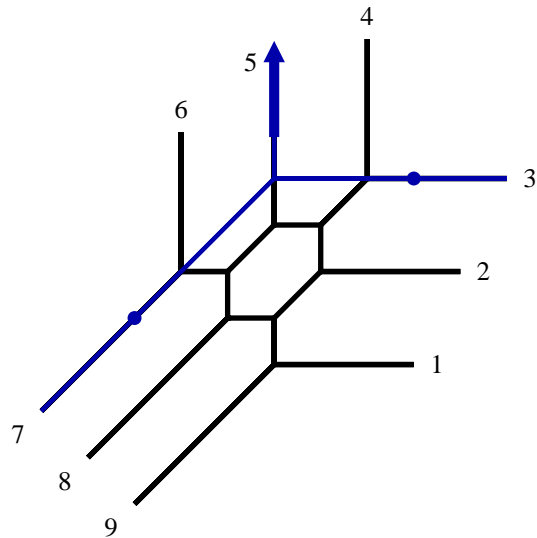}%
\vskip -0.1in
\caption{Constraints on the intersection points of a cubic and a line.}%
\label{fig:line}
\end{figure}

As a second example, suppose $U$ prefers $p_1$ and $V$ prefers $p_3$.
If $u,v\in K^*$ are  lifts of $U$ and $V$ then ${\rm trop}(\psi(u^{-1}v^{-1}))$ prefers $p_2$. Direct computation shows that the minimum of $\delta(u, p_1), \delta(v, p_3), \delta(u^{-1}v^{-1}, p_2)$ is achieved twice,
and this condition characterizes the possibilities for ${\rm trop}(\psi(u^{-1}v^{-1}))$.  For example, if $\delta(u, p_1)= \delta(v, p_3)=d$, then ${\rm trop}(\psi(u^{-1}v^{-1}))$ can be any point that prefers $p_2$ and is distance at least $d$ from the hexagon.  Thus, we modify $X\times X$ at $(U,V)$ by attaching a ray representing the points on the ray of $p_2$ at distance $\geq d$ from the hexagon.

Our third example is similar, but we display it visually.
 Let $U,V$ be the points shown in blue in Figure~\ref{fig:line}, each at distance $ 2\beta$ from the hexagon. 
 Then $\pi^{-1}(U,V)$
  is the thick blue subray that starts at distance $\beta$ from the node
 of the tropical line.

In this way, we modify the surface $X\times X$ at each point $(U,V)$ as prescribed by Proposition~\ref{p:fibers}.  A detailed case analysis  yields the $f$-vector in  Corollary \ref{cor:eleven}.
\end{proof}

We note that
the combinatorics of the tropical group law surface
${\rm TGL}(\psi)$ depends very much on $\psi$.  For example, if $\psi$ is a non-symmetric honeycomb embedding, then the torus in ${\rm TGL}(\psi)$ can contain
quadrilaterals and pentagons,
 as shown in Figure~\ref{fig:torus}.  For this reason, it seems that there is no ``generic" combinatorial description of the surface ${\rm TGL}(\psi)$ as $\psi$ ranges over all honeycomb embeddings.

\medskip

\section*{Acknowledgments}

\noindent
This project grew out of discussions we had with 
Spencer Backman and Matt Baker.
We are grateful for their contributions
and   help with the
analytic theory of elliptic curves.
MC was supported by a Graduate Research Fellowship from
the National Science Foundation.
BS was supported in part by the National Science Foundation 
(DMS-0968882) and the DARPA Deep Learning program (FA8650-10-C-7020).

\bigskip

\bibliographystyle{amsalpha}

\begin{thebibliography}{A}

\bibitem{AD}
M.~Artebani and I.~Dolgachev:
The Hesse pencil of plane cubic curves,
{\em Enseign. Math.} (2) {\bf 55} (2009) 235--273. 

\bibitem{bpr} M.~Baker, S.~Payne and J.~Rabinoff: Nonarchimedean geometry,
tropicalization, and metrics on curves, {\tt arXiv:1104.0320}.

\bibitem{BruMed} E.~Brugall\'e and L.~L.~de Medrano:
Inflection points of real and tropical plane curves
{\tt arXiv:1102.2478}.

\bibitem{buch} A.~Buchholz: {\em Tropicalization of Linear Isomorphisms
on Plane Elliptic Curves},
Diplomarbeit, University of G\"ottingen, Germany, April 2010.

\bibitem{hartshorne} R.~Hartshorne: {\em Algebraic Geometry}, Springer Verlag, New York, 2006.

\bibitem{helminck} P.A.~Helminck: {\em Tropical Elliptic Curves and $j$-Invariants},
Bachelor's Thesis, University of Groningen, The Netherlands, August 2011.

\bibitem{gfan} A.~N.~Jensen: Gfan, a software system for Gr\"obner fans and tropical varieties,
 available at \url{http://home.imf.au.dk/jensen/software/gfan/gfan.html}.
 
\bibitem{jordan} C. Jordan: M\'emoire sur les \'equations diff\'erentielles lin\'eaires \`a int\'egrale alg\'ebrique, 
 {\em Journal f\"ur Reine und Angew. Math.} {\bf 84} (1877) 89--215.

\bibitem{kmm} E.~Katz, H.~Markwig and T.~Markwig:
   The j-invariant of a plane tropical cubic, {\em J. Algebra} {\bf 320} (2008) 3832--3848.


\bibitem{ms} D.~Maclagan and B.~Sturmfels: Introduction to Tropical Geometry, book manuscript, 2010,
available at \
 {\tt http://www.warwick.ac.uk/staff/D.Maclagan/papers/TropicalBook.pdf}.
 
\bibitem{mz} G.~Mikhalkin and I.~Zharkov:  Tropical curves, their Jacobians and theta functions,
{\em Contemporary Mathematics} {\bf 465} (2007) 203--231.

\bibitem{nobe} A.~Nobe: The group law on the tropical Hesse pencil,
{\tt   arXiv:1111.0131}.



\bibitem{roquette} P.~Roquette: {\em Analytic Theory of Elliptic
Functions over Local Fields}, 
Hamburger Mathematische Einzelschriften, Heft 1, 
Vandenhoeck \& Ruprecht, G\"ottigen, 1970.

\bibitem{salmon} G.~Salmon:
\newblock \emph{A Treatise on the Higher Plane Curves: intended as a sequel to
  ``{A} Treatise on Conic Sections''}, 
 3rd ed., Dublin, 1879; reprinted by Chelsea Publ.~Co., New York, 1960.
 
\bibitem{silverbaby} J.~Silverman:
{\em The Arithmetic of Elliptic Curves},
 Graduate Texts in Mathematics, {\bf 106}, Springer Verlag, New York, 
 second edition, 2009.

\bibitem{silverman} J.~Silverman:
 {\em Advanced Topics in the Arithmetic of Elliptic Curves},
  Graduate Texts in Mathematics, {\bf 151}, Springer Verlag, New York, 1994.
 
\bibitem{speyer1} D.~Speyer:
Horn's problem, Vinnikov curves, and the hive cone,
{\em Duke Math. J.} {\bf 127} (2005) 395--427. 

\bibitem{speyer2} D.~Speyer:
Uniformizing tropical curves: genus zero and one, {\tt arXiv:0711.2677}.

\bibitem{vige} M.~Vigeland:
The group law on a tropical elliptic curve, {\em Math. Scand.} {\bf 104} (2009) 188--204.
  \end{thebibliography}

\end{document}